\documentclass[a4paper,11pt,twoside]{article}
\usepackage{latexsym}
\usepackage{a4wide}
\usepackage{amscd}
\usepackage{graphics}
\usepackage{graphicx}
\usepackage[english]{varioref}
\usepackage{amsmath}
\usepackage{amssymb}
\usepackage{mathrsfs}
\usepackage{amsthm}
\usepackage{amssymb,tikz}
\usepackage{verbatim}

\usepackage{bbm}
\usepackage{color}
\usepackage{accents}
\usepackage{enumerate}
\usepackage[utf8x]{inputenc}
\usepackage[T1]{fontenc}
\usepackage[english]{babel}
\usepackage{amsfonts, amscd, amsmath, amssymb, amsthm, mathrsfs, mathtools}
\usepackage{braket}
\input xy
\xyoption{all}
\usepackage[utf8x]{inputenc}
\usepackage[T1]{fontenc}
\usepackage[english]{babel}
\usepackage{amsfonts, amscd, amsmath, amssymb, amsthm, mathrsfs, mathtools}
\usepackage{graphicx}
\usepackage{subfig}
\usepackage{braket}
\usepackage[left=2.5cm,right=2.5cm,top=2.5cm,bottom=2.5cm]{geometry}
\usepackage{fancyhdr}
\theoremstyle{definition}

\newtheorem{remark}{Remark}[section]

\newtheorem{esempio}{Example}[section]

\theoremstyle{plain}
\newtheorem{definizione}{Definition}[section]
\newtheorem{teorema}{Theorem}[section]
\newtheorem{proposizione}{Proposition}[section]
\newtheorem{lemma}{Lemma}[section]
\newtheorem{corollario}{Corollary}[section]

\newcommand{\numberset}{\mathbb}

\newcommand{\R}{\numberset{R}}
\newcommand{\N}{\numberset{N}}
\newcommand{\Z}{\numberset{Z}}

\usepackage{bookmark,hyperref}

\DeclarePairedDelimiter{\abs}{\lvert}{\rvert}
\DeclarePairedDelimiter{\norma}{\lVert}{\rVert}

\makeatletter
\let\oldabs\abs
\def\abs{\@ifstar{\oldabs}{\oldabs*}}
\let\oldnorma\norma
\def\norma{\@ifstar{\oldnorma}{\oldnorma*}}

\title{Existence of Lipschitz continuous Lyapunov functions \\ strict outside the strong chain recurrent set}

\author{\hspace{-1cm}{OLGA BERNARDI${\,
}^{1}$ \quad ANNA FLORIO${\,
}^{2}$                  }
\vspace{0.3cm}
\\
\hspace{-1cm}${\ }^{1}$ Dipartimento di Matematica ``Tullio Levi-Civita'',
Universit\`a di Padova,\\
\hspace{-1cm}Via Trieste, 63 - 35121 Padova, Italy
\\ 
\\
\hspace{-1cm}${\ }^{2}$ Laboratoire de Mathématiques d'Avignon, Avignon Université, \\
\hspace{-1cm} 84018 Avignon, France \\}

\date{ }

\begin{document}

\maketitle
%\tableofcontents

\begin{abstract}
\noindent The aim of this paper is to study in detail the relations between strong chain recurrence for flows and Lyapunov functions. For a continuous flow on a compact metric space, uniformly Lipschitz continuous on the compact subsets of the time, we first make explicit a Lipschitz continuous Lyapunov function strict --that is strictly decreasing-- outside the strong chain recurrent set of the flow. This construction extends to flows some recent advances of Fathi and Pageault in the case of homeomorphisms; moreover, it improves Conley's result about the existence of a continuous Lyapunov function strictly decreasing outside the chain recurrent set of a continuous flow. We then present two consequences of this theorem. From one hand, we characterize the strong chain recurrent set in terms of Lipschitz continuous Lyapunov functions. From the other hand, in the case of a flow induced by a vector field, we establish a sufficient condition for the existence of a $C^{1,1}$ strict Lyapunov function and we also discuss various examples. Moreover, for general continuos flows, we show that the strong chain recurrent set has only one strong chain transitive component if and only if the only Lipschitz continuous Lyapunov functions are the constants. Finally, we provide a necessary and sufficient condition to guarantee that the strong chain recurrent set and the chain recurrent one coincide.
\end{abstract}

\section{Introduction}
Let $\phi = \{\phi_t\}_{t \in \R}$ be a continuous flow on a compact metric space $(X,d)$. In the paper \cite{east41}[Page 96], Robert Easton introduced the notion of strong chain recurrence. \\
~\newline
DEFINITION. (Strong chain recurrence)
\begin{itemize}
\item[$(i)$] 
\emph{\indent Given $x,y\in X$, $\varepsilon>0$ and $T > 0$, a strong $(\varepsilon,T)$-chain from $x$ to $y$ is a finite sequence $(x_i,t_i)_{i=1,\dots,n}\subset X\times\R$ such that $t_i\geq T$ for all $i$, $x_1=x$ and setting $x_{n+1}=y$ we have
\begin{equation} \label{prima intro ita}
\sum_{i=1}^nd(\phi_{t_i}(x_i),x_{i+1}) < \varepsilon.
\end{equation}
\item[$(ii)$] 
A point $x\in X$ is said to be strong chain recurrent if for all $\varepsilon>0$ and $T > 0$ there exists a strong $(\varepsilon,T)$-chain from $x$ to $x$. The set of strong chain recurrent points is denoted by $\mathcal{SCR}(\phi)$.
\item[$(iii)$] 
The points $x,y\in\mathcal{SCR}(\phi)$ belong to the same strong chain transitive component of $\mathcal{SCR}(\phi)$ if for any $\varepsilon>0$ and $T > 0$ there exist a strong $(\varepsilon,T)$-chain from $x$ to $y$ and a strong $(\varepsilon,T)$-chain from $y$ to $x$.} %\item[$(iv)$] The flow $\phi:X\times\R\rightarrow X$ is said to be strong chain transitive if for any $x,y\in X$, $\varepsilon>0$ and $T>0$ there is a strong $(\varepsilon,T)$-chain from $x$ to $y$.
\end{itemize}
\noindent This notion sharpens the one of chain recurrence, in which it is only required
\begin{equation} \label{seconda intro ita}
d(\phi_{t_i}(x_i),x_{i+1}) < \varepsilon
\end{equation} for all $i$. Chain recurrent sets are both easily seen to be closed and invariant, see e.g. \cite{akin46}{[Pages 12-13, 71 and 109-110]}, \cite{conARTICOLO}{[Theorem 3.3B]}, \cite{zz00}{[Page 248]} and \cite{Bernardiflorio}{[Proposition 3.1]}. 
Furthermore, strong chain recurrent set (strictly) contains the set of non-wandering points and it is (strictly) contained in the chain recurrent set, that is $\mathcal{NW}(\phi) \subseteq \mathcal{SCR}(\phi) \subseteq \mathcal{CR}(\phi)$, see for example \cite{zz00}{[Theorem 2.4]} and \cite{conARTICOLO}[Theorem 3.3B]. \\
\indent The study of the intimate relations between chain recurrent sets and Lyapunov functions comes from the celebrated paper \cite{conl33} by Charles Conley and has had recent important advances by Albert Fathi and Pierre Pageault in \cite{fathi2015} and \cite{pageault}. Before recalling these results, we need to premise the notions of Lyapunov function, neutral set and first integral. \\
~\newline
DEFINITION. (Lyapunov function, neutral set and first integral) \\
\emph{A function $f:X\rightarrow\R$ is a Lyapunov function for $\phi$ if $f\circ\phi_t\leq f$ for every $t\geq 0$.} \emph{The neutral set of a Lyapunov function $f$ is 
$$\mathcal{N}(f) := \{x \in X: \ \exists t > 0 \text{ such that } f(\phi_t(x)) = f(x)\}.$$}
\emph{A function $f:X\rightarrow\R$ is a first integral for $\phi$ if $f\circ\phi_t = f$ for every $t\in \R$.} \\
~\newline
\noindent In particular, $f$ is a first integral if and only if $\mathcal{N}(f) = X$. We refer to Lemma 1.5 in \cite{abbbercar} for a characterization of Lyapunov functions and first integrals in the case of a flow induced by a locally Lipschitz continuous vector field. \\
%For a detailed proof, we refer to Lemma .
%\begin{lemma} \label{ABClemma}
%Let $M$ be a manifold and $V$ a locally Lipschitz continuous vector field on $M$, inducing a complete flow $\phi:M\times\R\rightarrow M$. Let $f:M\rightarrow\R$ be a locally Lipschitz continuous function. Then, $f$ is a Lyapunov function (first integral) for $\phi$ if and only if $df\circ V\leq 0$ ($df\circ V = 0$) almost everywhere.
%\end{lemma}
\indent In the seminal paper \cite{conl33}, Conley described the structure of the chain recurrent set in terms of attractors and their ``complementary repellers'' and --as an outcome-- he proved the theorem below. Some authors refer to this result as the ``fundamental theorem of dynamical systems'', see e.g. \cite{norton} and \cite{franksarxiv} for an instructive treatment of the matter. \\
~\newline
THEOREM. (Conley, 1978) \\
\emph{Let $\phi: X \times \R \to X$ be a continuous flow on a compact metric space. Then there exists a continuous Lyapunov function $u:X \to \R$ for $\phi$ such that
\begin{itemize}
\item[$(i)$] $\mathcal{N}(u) = \mathcal{CR}(\phi)$.
\item[$(ii)$] If $x,y \in \mathcal{CR}(\phi)$, then $u(x) = u(y)$ if and only if $x$ and $y$ belong to the same chain transitive component of $\mathcal{CR}(\phi)$.
\item[$(iii)$] $u(\mathcal{CR}(\phi))$ is a compact nowhere dense subset of $\R$. 
\end{itemize}}
\noindent Conley called $u$ complete Lyapunov function for $\phi$, see Definition \ref{completa} of Section \ref{cinque}. In particular, point $(i)$ of the previous theorem is an either/or statement about what can happen: parts of the space are either chain recurrent or ``gradient-like''. In the same year --see \cite{east41}[Proposition 3]-- Easton connected the notion of strong chain recurrence to the property for the corresponding Lipschitz first integrals to be constants. Easton's contribution represented the first step towards the study of the relations between strong chain recurrence and Lyapunov functions. \\
\indent For a homeomorphism $g$ on a compact metric space, Fathi and Pageault in \cite{fathi2015} and \cite{pageault} presented a new variational point of view to face the study of recurrent sets and Lyapunov functions. Their techniques are very different from Conley's original ones and are inspired by Fathi's work in weak KAM theory, see \cite{fathiWeakKAM}. %Fathi and Pageault in \cite{fathi2015} and \cite{pageault} recently gave a new insight into the subject. Their point of view is different from the one of Conley's original work --which essentially uses refined topological dynamics techniques-- and it is inspired by the celebrated work by Fathi in weak KAM theory, see \cite{fathiWeakKAM}.
%This approach is based on a re-interpretation from a purely variational perspective of strong and chain recurrent sets.}
Indeed, thanks to the arbitrariness of the parameter $\varepsilon > 0$ involved in both the definitions of recurrence, Fathi and Pageault equivalently described recurrent points as minima of appropriate functionals defined on the space of finite sequences of points. In particular, bearing in mind formulae (\ref{prima intro ita}) and (\ref{seconda intro ita}), for strong chain recurrent points the functional is the sum of the amplitudes of the jumps; for chain recurrent points, the functional is the maximum of the amplitudes of the jumps. For a discrete dynamical system, if $u$ is a Lyapunov function for $g$, that is $u \circ g \le u$, the neutral set of $u$ is
$$\mathcal{N}(u) := \{x \in X: \ u(g(x)) = u(x) \}.$$
We remind two of their fundamental results. On one hand, they provided a new proof of the above Conley's theorem in the framework of discrete-time dynamical systems. On the other hand, they proved the next \\
~\newline
THEOREM. (Fathi \& Pageault, 2015). \\
\emph{Let $g: X \to X$ be a homeomorphism on a compact metric space. Then 
\begin{itemize} 
\item[$(i)$] There exists a Lipschitz continuous Lyapunov function $u$ for $g$ such that $\mathcal{N}(u) = \mathcal{SCR}(g)$.
\item[$(ii)$] $\mathcal{SCR}(g) = \bigcap \mathcal{N}(f)$, where the intersection is taken over all Lipschitz continuous Lyapunov functions $f$ for $g$.
\item[$(iii)$] $\mathcal{SCR}(g)$ has a unique strong chain transitive component if and only if the only Lipschitz continuous Lyapunov functions for $g$ are the constants.
\end{itemize}}
\noindent We refer respectively to Theorem 4.4, Corollary 4.5 and Theorem 4.8 in \cite{pageault} for the proofs of the points $(i)$, $(ii)$ and $(iii)$ above. More recently, an adaptation of Fathi and Pageault's techniques has led to the proof of point $(ii)$ of the previous theorem also for a flow which is Lipschitz continuous for every $t \ge 0$, uniformly for $t$ on compact subsets of $[0,+\infty)$, see \cite{abbbercar}[Theorem 2.2]. However, in \cite{abbbercar} it is not constructed a single Lyapunov function whose neutral set coincides with the strong chain recurrent set. We finally underline that, more recently, Fathi and Pageault in \cite{TAMS} give a criterion for the approximation of a Lyapunov function by a smooth one and also show that it is possible to obtain a smooth version of Conley's fundamental theorem for flows. \\
\indent This paper intends to examine in depth the relations between strong chain recurrence and Lyapunov functions in the case of flows. First,  we prove this improvement of point $(i)$ of Conley's original result: \\
~\newline
THEOREM 1. \textit{Let $\phi:X\times\R\rightarrow X$ be a continuous flow on a compact metric space $(X,d)$, uniformly Lipschitz continuous on the compact subsets of $[0,+\infty)$. Then there exists a Lipschitz continuous Lyapunov function $u:X\rightarrow\R$ for $\phi$ such that}
\begin{equation*}
\mathcal{N}(u) =\mathcal{SCR}(\phi).
\end{equation*}
The proof of this theorem combines together variational and dynamical methods. In particular, we start by constructing --by accurately adapting Fathi and Pageault's techniques-- a function which strictly decreases along the flow out of a closed set containing $\mathcal{SCR}(\phi)$ and definitively (that is, for $t \ge T$) in time. We refer to Proposition \ref{proprieta u_T} for the precise statement. Thereafter, by using some ideas coming from the original proof of Conley's theorem, we modify this function in order to obtain the desired Lyapunov function, that is strictly decreasing outside $\mathcal{SCR}(\phi)$ for any $t > 0$. See Lemma \ref{proprieta utilde}, Lemma \ref{proprieta baru} and the consequent Theorem \ref{main thm}. \\
\noindent We proceed by discussing the main consequences of the above theorem. From one hand, we give an alternative --i.e. constructive-- proof of Theorem 2.2 in \cite{abbbercar}: \\
~\newline
\noindent COROLLARY 1. \emph{Let $\phi:X\times\R\rightarrow X$ be a continuous flow on a compact metric space $(X,d)$, uniformly Lipschitz continuous on the compact subsets of $[0,+\infty)$. Then
\begin{equation*}
\mathcal{SCR}(\phi)=\bigcap\mathcal{N}(f).
\end{equation*}
where $f$ is any Lipschitz continuous Lyapunov function for $\phi$.} \\
~\newline
From the other hand, we discuss a sufficient condition in order to establish the existence of a $C^{1,1}$ strict Lyapunov function for a flow induced by a vector field: \\
~\newline
COROLLARY 2. \textit{Let $(M,g)$ be a $C^{\infty}$ closed connected Riemannian manifold. Let $V: M \to TM$ be a $C^k$ vector field, $k \ge 2$, inducing the flow $\phi$. If $
\mathcal{SCR}(\phi)=\mathcal{CR}(\phi)$ then there exists a ${\cal{C}}^{1,1}$ Lyapunov function $u: M \to \R$ for $\phi$ such that} $
\mathcal{N}(u) =\mathcal{SCR}(\phi).
$ \\
~\newline
\noindent See respectively Corollary \ref{altra dim} and Corollary \ref{corollario mane}. In particular, the proof of the second corollary uses existence and regularity results for sub-solutions of the so-called Mañé Hamiltonians. We then pass to discuss some cases --Examples \ref{41} and \ref{es2}-- where the hypothesis ${\mathcal{SCR}}(\phi) = {\mathcal{CR}}(\phi)$ of this corollary is not satisfied and any sub-solutions of weak KAM theory provide first integrals, while the Lyapunov function previously constructed is not a first integral. \\
We also remark that the converse implication of this corollary does not in general hold true. To be thorough, in Definition \ref{completa} we remind the milder notion of pseudo-complete Lyapunov function and in Proposition \ref{ultima car} we prove a necessary and sufficient condition for the strong chain recurrent and the chain recurrent sets to be equal: \\
~\newline
PROPOSITION. {\emph{Let $\phi:X\times\R\rightarrow X$ be a continuous flow on a compact metric space $(X,d)$. Then, $\mathcal{SCR}(\phi) = \mathcal{CR}(\phi)$ if and only if there exists a pseudo-complete Lyapunov function $u: X \to \R$ for $\phi$ such that $u(\mathcal{SCR}(\phi))$ is totally disconnected.}} \\
~\newline
\noindent We finally analyze the relations between Lipschitz continuous Lyapunov functions and the strong chain transitive components of $\mathcal{SCR}(\phi)$. As a final outcome --see Theorem \ref{FP for flows}-- we show that point $(iii)$ of Fathi and Pageault's theorem still holds in the case of a continuous flow: \\
~\newline
THEOREM 2. \emph{Let $\phi:X\times\R\rightarrow X$ be a continuous flow on a compact metric space $(X,d)$. $\mathcal{SCR}(\phi)$ has a unique strong chain transitive component if and only if the only Lipschitz continuous Lyapunov functions for $\phi$ are the constants.} \\
~\newline
\begin{comment}
The paper is organized as follows. {\color{red}{In Section \ref{section 2} we introduce most of the technical tools used throughout the work. Section \ref{quattro} is devoted to the construction of the fundamental function, the starting point to obtain the final Lyapunov one. This last function is made explicit in Section \ref{cinque}, where also some outcomes and examples are discussed. Finally, Section \ref{sei} introduces the notion of $L_T$-dominated functions and presents results concerning strong chain transitive components.
}}
\end{comment}
%~\newline
\noindent \textbf{Acknowledgements.} O.~Bernardi has been supported by the project CPDA149421/14 of the University of Padova. O.~Bernardi and A.~Florio acknowledge the support of G.N.F.M. The authors also thank the anonymous referee for the careful reading of the    manuscript and his/her many insightful comments and suggestions. 

\section{The function $L_T$: definition and properties}\label{section 2}
\noindent Let $\phi:X \times \R \to X$ be a continuous flow on a compact metric space $(X,d)$. The next definitions are the continuous-time versions of the ones introduced by Fathi and Pageault for homeomorphisms, see \cite{pageault}[Chapter 2, Section 3] and \cite{fathi2015}[Section 2.1]. We also underline that their settings have recently been extended by Ethan Akin and Jim Wiseman in  \cite{AW2017} both to relations and to uniform spaces. \\
%\indent {\color{blue}{$\rightarrow$ Prime definizioni e propriet\`a.}} 
\indent Let $x,y\in X$. For any $T > 0$, we indicate by $C_T(x,y)$ the set of chains $C = (x_i,t_i)_{i=1,\dots,n}\subset X\times \R$ from $x_1 =x$ to $x_{n+1} = y$ such that $t_i\geq T$ for all $i$. The cost of going from $x$ to $y$ through a chain $C \in C_T(x,y)$ is given by
$$l_T(C) := \sum_{i=1}^n d(\phi_{t_i}(x_i),x_{i+1}).$$
Moreover, for any $T > 0$, we define the non-negative function
\begin{equation}\label{def L_T}
\begin{split}
L_T&:X\times X\rightarrow [0,+\infty) \\
L_T(x,y)&:=\inf\{ l_T(C):\ C\in C_T(x,y) \}.
\end{split}
\end{equation}
\noindent In the next proposition, we summarize some useful facts about the function $L_T$. We refer to \cite{pageault}[Proposition 3.1] and \cite{fathi2015}[Proposition 2.1] for analogous results in the case of a homeomorphism.
\begin{proposizione}\label{proprieta L_T} 
Let $\phi:X\times\R\rightarrow X$ be a continuous flow on a compact metric space $(X,d)$. For any fixed $T > 0$, the following properties hold:
\begin{itemize}
\item[$(i)$] For any $x,y,z\in X$,
$$
L_T(x,y)\leq L_T(x,z)+L_T(z,y).
$$
\item[$(ii)$] For any $x\in X$,
$$
L_T(x,\phi_t(x))=0 \qquad \forall t\geq T.
$$
\item[$(iii)$] For any $x,y,z\in X$,
$$
\abs{L_T(x,y)-L_T(x,z)}\leq d(y,z).
$$
As a consequence, for any fixed $x \in X$, the function $z \mapsto L_T(x,z)$ is $1$-Lipschitz continuous. 
\item[$(iv)$] For any fixed $x \in X$, the function $z \mapsto L_T(z,x)$ is upper semicontinuous.
\end{itemize}
\end{proposizione}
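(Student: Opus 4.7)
The plan is to treat the four properties in turn; the first two follow directly from the definition of $L_T$, while the last two rest on an end-point perturbation of a near-optimal chain.

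For (i), I would take chains $C_1=(x_i,t_i)_{i=1,\dots,n}\in C_T(x,z)$ and $C_2=(y_j,s_j)_{j=1,\dots,m}\in C_T(z,y)$ that realize the respective infima up to $\varepsilon$, and form their concatenation
$$C:=\bigl((x_1,t_1),\dots,(x_n,t_n),(y_1,s_1),\dots,(y_m,s_m)\bigr),$$
which lies in $C_T(x,y)$ because $y_1=z=x_{n+1}$ and every time is $\geq T$; then $l_T(C)=l_T(C_1)+l_T(C_2)$, and letting $\varepsilon\to 0$ gives the triangle inequality. For (ii) it suffices to consider, for $t\geq T$, the one-step chain $C=(x,t)\in C_T(x,\phi_t(x))$, whose cost $d(\phi_t(x),\phi_t(x))=0$ forces $L_T(x,\phi_t(x))=0$.

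For (iii) I would fix $\varepsilon>0$ and pick a chain $C=(x_i,t_i)_{i=1,\dots,n}\in C_T(x,y)$ with $l_T(C)<L_T(x,y)+\varepsilon$. Replacing only the final endpoint $x_{n+1}=y$ by $z$ produces a chain $C'\in C_T(x,z)$ whose cost differs from $l_T(C)$ solely in the last term, where $d(\phi_{t_n}(x_n),y)$ is replaced by $d(\phi_{t_n}(x_n),z)$; by the triangle inequality on $(X,d)$, the two costs differ by at most $d(y,z)$, so $L_T(x,z)\leq L_T(x,y)+d(y,z)+\varepsilon$. Swapping the roles of $y$ and $z$ and letting $\varepsilon\to 0$ yields (iii), and in particular the $1$-Lipschitz dependence on the second argument.

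For (iv) I would argue upper semicontinuity of $z\mapsto L_T(z,x)$ at an arbitrary point $z_0\in X$. Given $\varepsilon>0$, choose $C=(x_i,t_i)_{i=1,\dots,n}\in C_T(z_0,x)$ with $l_T(C)<L_T(z_0,x)+\varepsilon$. For $z$ near $z_0$, I would perturb only the starting point, defining $C'$ by replacing $x_1=z_0$ with $z$ and keeping the rest unchanged; then $C'\in C_T(z,x)$ and its cost differs from $l_T(C)$ only through the first jump, which becomes $d(\phi_{t_1}(z),x_2)$ instead of $d(\phi_{t_1}(z_0),x_2)$. Since $\phi_{t_1}$ is continuous at $z_0$ for the fixed time $t_1$, the difference is bounded by $d(\phi_{t_1}(z),\phi_{t_1}(z_0))$, which tends to $0$ as $z\to z_0$. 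Hence $\limsup_{z\to z_0}L_T(z,x)\leq L_T(z_0,x)+\varepsilon$, and since $\varepsilon$ is arbitrary, upper semicontinuity follows. The asymmetry with (iii) is the main conceptual point to flag: perturbing the second argument affects only one distance term in a uniformly Lipschitz way, whereas perturbing the first argument passes through the possibly wild map $\phi_{t_1}$, so without further assumptions on the flow one cannot upgrade upper semicontinuity to continuity.
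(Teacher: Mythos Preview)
Your proof is correct and follows essentially the same approach as the paper: for $(i)$--$(iii)$ the paper simply refers to Lemma~2.4 in \cite{abbbercar} (the commented-out arguments in the source are identical to yours, via concatenation for $(i)$, the one-step chain $(x,t)$ for $(ii)$, and endpoint replacement for $(iii)$), and for $(iv)$ the paper does exactly your perturbation of the starting point of a near-optimal chain, using the continuity of $\phi_{t_1}$ to control the modified first jump. Your closing remark on the asymmetry between $(iii)$ and $(iv)$ is a nice addition and matches the paper's own discussion elsewhere.
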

\begin{proof} For points $(i),$ $(ii)$ and $(iii)$ we refer to Lemma 2.4 in \cite{abbbercar}.\\
\begin{comment}
$(i)$ It is always possible to concatenate two chains $C\in C_T(x,z)$ and $\bar{C}\in C_T(z,y)$ in order to obtain a chain $\tilde{C} \in C_T(x,y)$. Hence, since $L_T$ is defined by considering the infimum over all possible chains connecting two fixed points, it holds that 
$$
L_T(x,y)\leq l_T(\tilde{C}) = l_T(C)+l_T(\bar{C}).
$$
We easily conclude by taking the infimum over all chains in $C_T(x,z)$ and $C_T(z,y)$ respectively. \\
$(ii)$ For any $x\in X$ and $t\geq T$, it is enough to consider the chain $(x,t)$ in $C_T(x,\phi_t(x))$. Indeed
$$0\leq L_T(x,\phi_t(x))\leq l_T(C) = d(\phi_t(x),\phi_t(x)) = 0.
$$
$(iii)$ Let $C \in C_T(x,y)$ be a chain from $x_1 = x$ to  $x_{n+1}=y$. Let consider the chain $\tilde{C} \in C_T(x,z)$ obtained from $C$ by substituting the last point with $z$. Hence, one has
$$
L_T(x,z)\leq l_T(\tilde{C}) \leq l_T(C) + d(y,z).
$$
Taking the infimum over all chains in $C_T(x,y)$, we obtain that
$$
L_T(x,z)\leq L_T(x,y)+d(y,z).
$$
Exchanging the role of $y$ and $z$, the desired inequality immediately follows. \\
\end{comment}
$(iv)$ %\marginpar{\color{red}Ho cambiato la notazione e i ruoli di $x,y,z$ per mantenere coerenza con l'enunciato della prop.} 
By definition of $L_T$, for any $z\in X$ and $\varepsilon>0$, from the definition of infimum, there exists a chain $C_{\varepsilon} \in C_T(z,x)$ such that 
$$
l_T(C_{\varepsilon})<L_T(z,x)+\dfrac{\varepsilon}{2}.
$$
Let $t_1=t_1(z)\geq T$ be the first time of the chain $C_{\varepsilon}$. Since $\phi_{t_1}$ is a homeomorphism, for any $\varepsilon>0$ there exists $\delta=\delta(\varepsilon)>0$ so that
$$
d(z,y)<\delta\qquad\Rightarrow\qquad d(\phi_{t_1}(z),\phi_{t_1}(y))<\dfrac{\varepsilon}{2}.
$$
Let us consider the chain $\tilde{C}_{\varepsilon}\in C_T(y,x)$ obtained from $C_{\varepsilon}$ by substituting the first point with $y$. Hence, if $d(z,y)<\delta$, we have that
$$
L_T(y,x)\leq l_T(\tilde{C}_\varepsilon)\leq d(\phi_{t_1}(z),\phi_{t_1}(y))+l_T(C_{\varepsilon})<\varepsilon + L_T(z,x).
$$
%{\color{red}{Exchanging the role of $z$ and $y$, we obtain the continuity of the function $z \mapsto L_T(z,x)$.}}
\end{proof}
%\indent {\color{blue}{$\rightarrow$ $\mathcal{A_T}$ e sue propriet\`a.}} 
\indent For any $T > 0$, we define the subset $\mathcal{A}_T$ of $X$ as
\begin{equation}\label{def A_T}
\mathcal{A}_T:=\{x\in X:\ L_T(x,x)=0\}.
\end{equation}
Since $C_T(x,y)$ is contained in $C_{T'}(x,y)$ when $T \ge T'$, the function $T \mapsto L_T(x,y)$ is monotonically increasing for any pair $(x,y) \in X \times X$ and $\mathcal{A}_{T}\subseteq\mathcal{A}_{T'}$. We notice that
\begin{equation}\label{caratterizzazione scr at}
\mathcal{SCR}(\phi) = \bigcap_{T > 0}\mathcal{A}_T.
\end{equation}
The next proposition characterizes the points of the strong chain recurrent set in terms of the functions $L_T$. 
\begin{proposizione}\label{invarianza}
Let $\phi:X\times\R\rightarrow X$ be a continuous flow on a compact metric space $(X,d)$.
\begin{itemize}
\item[$(i)$] If $x\in\mathcal{SCR}(\phi)$, then $L_T(\phi_t(x),x) = L_T(x,\phi_t(x))= 0$ for any $T > 0$ and $t \in \R$.
\item[$(ii)$] If for any $T > 0$ there exists a time $t = t(T) \in \R$ such that  $L_T(\phi_t(x),x) = L_T(x,\phi_t(x))= 0$, then $x\in\mathcal{SCR}(\phi)$.
\end{itemize}
\noindent In particular, if $x  \in \mathcal{SCR}(\phi)$ then $x$ and $\phi_t(x)$ belong to the same strong chain transitive component of $\mathcal{SCR}(\phi)$ for any $t \in \R$.
\end{proposizione}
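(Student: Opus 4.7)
The plan is to reduce everything to the characterization $x \in \mathcal{SCR}(\phi) \Leftrightarrow L_T(x,x) = 0$ for every $T>0$, and to leverage the triangle inequality from Proposition \ref{proprieta L_T}(i) together with the uniform continuity of each map $\phi_t$ on the compact space $X$. With these tools, part (ii) is essentially automatic: if for some $t=t(T)$ one has $L_T(x,\phi_t(x)) = L_T(\phi_t(x),x) = 0$, then the triangle inequality gives $L_T(x,x) \le L_T(x,\phi_t(x)) + L_T(\phi_t(x),x) = 0$; since this holds for every $T>0$, the point $x$ lies in $\bigcap_{T>0}\mathcal{A}_T = \mathcal{SCR}(\phi)$.

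For part (i), I would fix $x \in \mathcal{SCR}(\phi)$, $T>0$ and $t \in \mathbb{R}$, and show that both $L_T(x,\phi_t(x))$ and $L_T(\phi_t(x),x)$ can be bounded by an arbitrary $\varepsilon > 0$. The idea is to start from a strong $(\eta, T')$-chain $C=(x_i,t_i)_{i=1,\dots,n}$ from $x$ to itself, with $T' := T+|t|$ and $\eta$ very small, which exists since $x \in \mathcal{A}_{T'}$, and to surgically alter a single time-parameter by $t$. For $L_T(\phi_t(x),x)$, I replace the first pair $(x,t_1)$ by $(\phi_t(x), t_1 - t)$: the inequality $t_1 \ge T'$ guarantees $t_1 - t \ge T$, while the identity $\phi_{t_1 - t}(\phi_t(x)) = \phi_{t_1}(x)$ makes the total cost literally equal to that of $C$, hence less than $\eta$. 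For $L_T(x,\phi_t(x))$, I replace the last pair $(x_n,t_n)$ by $(x_n, t_n + t)$ and the target $x$ by $\phi_t(x)$; the modified last jump becomes $d(\phi_t(\phi_{t_n}(x_n)), \phi_t(x))$, and by the uniform continuity of $\phi_t$ on $X$ this stays below $\varepsilon/2$ whenever the original jump $d(\phi_{t_n}(x_n),x) < \eta$ is small enough. Choosing $\eta$ small in terms of $\varepsilon$ and of the modulus of continuity of $\phi_t$ delivers the required estimate, and letting $\varepsilon \to 0$ proves (i).

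The in-particular statement then follows by combining (i) with one more triangle inequality: $L_T(\phi_t(x),\phi_t(x)) \le L_T(\phi_t(x),x) + L_T(x,\phi_t(x)) = 0$ for every $T>0$ shows $\phi_t(x) \in \mathcal{SCR}(\phi)$, and the chains of vanishing cost produced in (i) witness that $x$ and $\phi_t(x)$ lie in the same strong chain transitive component. The only genuine subtlety in the whole argument is the asymmetry between the two halves of (i): the \emph{prepend} modification preserves the cost of $C$ verbatim, whereas the \emph{append} modification introduces a new last jump that does not appear in $C$; this mismatch is overcome precisely by the compactness of $X$, on which each flow map $\phi_t$ is uniformly continuous, so the distortion produced by $\phi_t$ on a small jump is itself small.
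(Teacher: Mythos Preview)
Your argument is correct. Part~(ii) and the first half of part~(i) (that $L_T(\phi_t(x),x)=0$) match the paper's proof essentially verbatim: the paper also prepends $(\phi_t(x),t_1-t)$ to a chain from $x$ to $x$ with times $\ge T'$ large enough (the paper uses $T'=(N+1)T$ with $t\le NT$, you use $T'=T+|t|$; either works).

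Where you diverge is in showing $L_T(x,\phi_t(x))=0$. You append, replacing $(x_n,t_n)$ by $(x_n,t_n+t)$ and the endpoint $x$ by $\phi_t(x)$; the new last jump $d(\phi_t(\phi_{t_n}(x_n)),\phi_t(x))$ is controlled via the uniform continuity of $\phi_t$ on the compact space $X$. The paper instead splits into cases: for $t\ge T$ it quotes Proposition~\ref{proprieta L_T}(ii) directly, and for $t<T$ it invokes the (already known) invariance of $\mathcal{SCR}(\phi)$ to start from a chain $C_\varepsilon\in C_{(N+1)|t|}(\phi_t(x),\phi_t(x))$ and then prepends $(x,t_1+t)$, so that the cost is literally preserved with no continuity estimate needed. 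Your route is more uniform and self-contained (you recover the invariance of $\mathcal{SCR}(\phi)$ as a corollary rather than citing it), at the price of the extra soft-analysis step; the paper's route avoids any distortion of the cost but leans on an external fact.
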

\begin{proof} $(i)$ Let $x\in\mathcal{SCR}(\phi)$, $T > 0$ and $t\in \R$ be fixed and take $N = N(T,t)\in\N$ such that $t\leq NT$. Since $x \in \mathcal{SCR}(\phi)$, $L_{(N+1)T}(x,x)=0$. This means that for any $\varepsilon>0$ there exists a chain $C_{\varepsilon}=(x_i,t_i)_{i = 1,\ldots, n} \in C_{(N+1)T}(x,x)$ such that $$
l_{(N+1)T}(C_{\varepsilon})\leq\varepsilon.$$
Let consider the following chain $\tilde{C}_{\varepsilon} \in C_T(\phi_t(x),x)$ obtained from $C_{\varepsilon}$:
$$\tilde{C}_{\varepsilon}= \left((\phi_t(x),t_1-t),(x_2,t_2),\dots,(x_n,t_n)\right).$$
Then it clearly holds that
$$
0\leq L_T(\phi_t(x),x)\leq l_T(\tilde{C}_{\varepsilon})=l_{(N+1)T}(C_{\varepsilon})\leq\varepsilon.
$$
From the arbitrariness of $\varepsilon >0$, we deduce that $L_T(\phi_t(x),x)=0$. Since the above argument can be repeated for any fixed $x\in\mathcal{SCR}(\phi)$, $T > 0$ and $t\in \R$, we conclude that if $x\in\mathcal{SCR}(\phi)$ then 
$L_T(\phi_t(x),x)= 0$ for any $T > 0$ and $t \in \R$. \\
\indent For a fixed $x\in\mathcal{SCR}(\phi)$, $T > 0$ and $t\in \R$, we now prove the other equality. If $t\geq T$, the fact that $L_T(x,\phi_t(x))=0$ corresponds exactly to property $(ii)$ of Proposition \ref{proprieta L_T}. Consequently, let $t < T$ and choose $N = N(T,t) \in\N$ such that $T < N |t|$. Moreover, observe that by the invariance of the strong chain recurrent set, $\phi_t(x)\in\mathcal{SCR}(\phi)$. This means that, for any $\varepsilon>0$ there exists a chain $C_{\varepsilon}=(x_i,t_i)_{i = 1,\ldots, n} \in C_{(N + 1)|t|}(\phi_t(x),\phi_t(x))$ such that 
$$l_{(N+1)|t|}(C_{\varepsilon})\leq\varepsilon.$$
\noindent Let us consider the following chain $\tilde{C}_{\varepsilon} \in C_T(x,\phi_t(x))$ obtained from $C_{\varepsilon}$:
$$\tilde{C}_{\varepsilon} = \left((x,t_1+t),(x_2,t_2),\dots,(x_n,t_n)\right),$$
so that
$$
0\leq L_T(x,\phi_t(x))\leq l_T(\tilde{C}_{\varepsilon})=l_{(N+1)|t|}(C_{\varepsilon})\leq\varepsilon.
$$
From the arbitrariness of $\varepsilon > 0$, we conclude that $L_T(x,\phi_t(x))=0$. Since the above argument holds for any fixed $x\in\mathcal{SCR}(\phi)$, $T > 0$ and $t < T$, we conclude that if $x\in\mathcal{SCR}(\phi)$ then 
$L_T(x,\phi_t(x))= 0$ for any $T > 0$ and $t \in \R$. \\
\noindent $(ii)$ Conversely, for a point $x \in X$, let assume that for any $T > 0$ there exists a time $t = t(T) \in \R$ such that
$$
L_T(\phi_t(x),x)=L_T(x,\phi_t(x))=0
$$
Thanks to property $(i)$ of Proposition \ref{proprieta L_T}, we have that
$$
0\leq L_T(x,x)\leq L_T(x,\phi_t(x))+L_T(\phi_t(x),x)=0.
$$
From the arbitrariness of $T > 0$, we deduce that $x\in\mathcal{SCR}(\phi)$.
\end{proof}

\section{Construction of Lipschitz Lyapunov functions definitively strict} \label{quattro}
Through the whole Sections \ref{quattro} and \ref{cinque}, we assume that $\phi: X \times \R \to X$ is  a  continuous  flow on  a  compact  metric  space,  uniformly  Lipschitz  continuous  on every  compact subset  of $[0,+\infty).$ \noindent This means that for any $T > 0$ there exists $M_T > 0$ such that 
\begin{equation} \label{flow ip}
d(\phi_t(x),\phi_t(y)) \le M_T d(x,y) \qquad \forall t \in [0,T].
\end{equation}
\noindent The above Lipschitz regularity assumption is surely satisfied by the flow of a Lipschitz continuous vector field on a compact manifold. Moreover, we precise that we can make a measurable choice of the function $s \mapsto M_s$, for instance
\begin{equation} \label{misurabile}
M_s := \max \left( \sup_{t \in [0,s], \ x \ne y} \frac{d(\phi_t(x), \phi_t(y))}{d(x,y)}, 1 \right).
\end{equation}
\indent Let $\mathcal{K} := \{ x_1, \ldots, x_j, \ldots \}$ be a countable dense subset of $X$; such a set exists since $X$ is a compact metric space. For any $T > 0$, we define the function 
\begin{equation}\label{definizione u_T}
\begin{split} 
u_T&:X\rightarrow\R \\
u_T(x)&:= \sum_{j\in\N} \frac{1}{2^j}L_T(x_j,x).
\end{split}
\end{equation}
We underline that this technique is the same of Fathi and Pageault, see \cite{fathi2015}[Proposition 2.3] and \cite{pageault}[Chapter 2, Proposition 4.1]. Clearly, the function $u_T$ is bounded. Indeed, for any $x \in X$, it holds that
$$
\abs{u_T(x)}\leq \sum_{j\in\N}\dfrac{1}{2^j}L_T(x_j,x)\leq \sum_{j\in\N}\dfrac{1}{2^j} d(\phi_T(x_j), x) \leq 2\ diam(X)
$$
where $diam (X) = \max \{d(x,y): \ x,y \in X\} < +\infty$. In the next proposition we summarize the properties of $u_T$. 
\begin{proposizione}\label{proprieta u_T}
Let $\phi:X\times\R\rightarrow X$ be a continuous flow on a compact metric space $(X,d)$, uniformly Lipschitz continuous on compact subsets of $[0,+\infty)$. For any fixed $T > 0$, the following properties hold:
\begin{itemize}
\item[$(i)$] $u_T$ is $2$-Lipschitz continuous.
\item[$(ii)$] $u_T$ is definitively a Lyapunov function for $\phi$, that is
$$
u_T(\phi_t(x))\leq u_T(x) \qquad \qquad \text{for any } x\in X \text{ and } t \ge T.$$
\item[$(iii)$] $u_T$ is definitively strict outside $\mathcal{A}_{\frac{T}{2}}$, that is
$$
u_T(\phi_t(x))<u_T(x) \qquad \qquad \text{for any } x\in X \setminus \mathcal{A}_{\frac{T}{2}} \text{ and } t \ge T.$$
\end{itemize}
\end{proposizione}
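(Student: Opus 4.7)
The plan is to handle the three properties in turn, with (iii) being by far the most substantive step.

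For (i), each map $z \mapsto L_T(x_j, z)$ is $1$-Lipschitz by Proposition \ref{proprieta L_T}(iii), and the weighted series has total mass $\sum_{j} 2^{-j} = 2$, so $u_T$ is a uniformly convergent sum of $2^{-j}$-Lipschitz functions and is therefore $2$-Lipschitz. For (ii), fix $x \in X$ and $t \ge T$: Proposition \ref{proprieta L_T}(ii) gives $L_T(x, \phi_t(x)) = 0$, and the triangle inequality (Proposition \ref{proprieta L_T}(i)) yields $L_T(x_j, \phi_t(x)) \le L_T(x_j, x) + L_T(x, \phi_t(x)) = L_T(x_j, x)$ for every $j$. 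Summing against the weights $2^{-j}$ gives $u_T(\phi_t(x)) \le u_T(x)$.

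The heart of the proof is (iii). Fix $x \notin \mathcal{A}_{T/2}$ and $t \ge T$, and set $\alpha := L_{T/2}(x, x) > 0$. The idea is to locate one index $j$ for which $L_T(x_j, x) - L_T(x_j, \phi_t(x))$ is bounded below by a definite positive constant. By density of $\mathcal{K}$, pick $x_j$ with $\eta := d(x_j, x)$ small, to be fixed at the end. The single-hop chain $(x_j, t)$ yields the easy upper bound $L_T(x_j, \phi_t(x)) \le d(\phi_t(x_j), \phi_t(x)) \le M_t \eta$. For the crucial lower bound on $L_T(x_j, x)$, take any chain $C \in C_T(x_j, x)$ with first leg $(x_j, s_1)$, $s_1 \ge T$, and split this leg into $(x_j, T/2)$ and $(\phi_{T/2}(x_j), s_1 - T/2)$; the resulting chain $C' \in C_{T/2}(x_j, x)$ has the same cost as $C$ (the new intermediate vertex is exactly $\phi_{T/2}(x_j)$, so the inserted transition is free). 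Swapping the initial point $x_j$ for $x$ in $C'$ produces a chain $C^* \in C_{T/2}(x, x)$ whose cost differs from $l(C')$ only in the first hop, by at most $d(\phi_{T/2}(x), \phi_{T/2}(x_j)) \le M_{T/2}\,\eta$. Therefore
\begin{equation*}
\alpha \;=\; L_{T/2}(x,x) \;\le\; l(C^*) \;\le\; l(C) + M_{T/2}\,\eta,
\end{equation*}
and taking the infimum over $C$ gives $L_T(x_j, x) \ge \alpha - M_{T/2}\,\eta$. Combining the two bounds, $L_T(x_j, x) - L_T(x_j, \phi_t(x)) \ge \alpha - (M_{T/2} + M_t)\,\eta \ge \alpha/2$ as soon as $\eta < \alpha / (2(M_{T/2} + M_t))$; such an $x_j$ exists by density of $\mathcal{K}$. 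This single term contributes at least $2^{-j}\alpha/2 > 0$ to $u_T(x) - u_T(\phi_t(x))$, which yields the strict inequality.

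The main obstacle in (iii) is the lower bound on $L_T(x_j, x)$: a naive substitution of $x_j$ by $x$ in a near-optimal chain distorts the first hop by a factor $M_{s_1}\,\eta$, and $s_1$ is a priori unbounded along such chains (the cost alone does not constrain it). The splitting trick that re-routes the first hop to time exactly $T/2$, pushing the modified comparison chain from $C_T$ into the larger class $C_{T/2}$, controls this distortion uniformly via $M_{T/2}$; this is precisely what makes $\mathcal{A}_{T/2}$, rather than $\mathcal{A}_T$, the correct set appearing in the statement.
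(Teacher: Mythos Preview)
Your proof is correct, and the core chain-splitting idea in part (iii) --- subdivide the first leg at time $T/2$ and replace the initial point to land in $C_{T/2}(x,x)$ --- is exactly the same trick the paper uses. The presentation differs in two respects worth noting. First, you argue directly, producing one index $j$ with a quantified gap $L_T(x_j,x)-L_T(x_j,\phi_t(x)) \ge \alpha/2$, whereas the paper argues by contradiction: assuming $u_T(\phi_s(z))=u_T(z)$, it deduces $L_T(x_j,\phi_s(z))=L_T(x_j,z)$ for \emph{every} $j$, and then pushes $L_{T/2}(z,z)$ below $2/m$ for all $m$. Second, for the upper bound on $L_T(x_j,\phi_t(x))$ you use the single hop $(x_j,t)$ and the constant $M_t$; the paper instead splits into the cases $s\in[T,2T]$ and $s>2T$ (using a two-step chain in the latter) so as to invoke only $M_{2T}$ throughout. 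Your choice is perfectly adequate here since $t$ is fixed in the statement, while the paper's version gives a $t$-independent control at the cost of a small case analysis. Both routes lead to the same conclusion via the same geometric mechanism.
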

\begin{proof} $(i)$ Thanks to property $(iii)$ of Proposition \ref{proprieta L_T}, for any $x,y\in X$ it holds that
$$
\abs{u_T(y)-u_T(x)}\leq\sum_{j\in\N}\dfrac{1}{2^j}\abs{L_T(x_j,y)-L_T(x_j,x)}\leq 2d(x,y)
$$
$(ii)$ By point $(i)$ of Proposition \ref{proprieta L_T}, for any $x,y\in X$ we have that
$$
u_T(y)-u_T(x) =\sum_{j\in\N}\dfrac{1}{2^j}\left( L_T(x_j,y)-L_T(x_j,x)\right)\leq 2 L_T(x,y)
$$
For $y=\phi_t(x)$ and $t\geq T$, we obtain
$$
u_T(\phi_t(x))-u_T(x)\leq 2 L_T(x,\phi_t(x)) = 0
$$
where --in the last equality-- we use property $(ii)$ of Proposition \ref{proprieta L_T}. \\
$(iii)$ 
\noindent \noindent Arguing by contradiction, let us suppose that $u_T$ is not definitively strict outside $\mathcal{A}_{{\frac{T}{2}}}$. This means that there exist $z \in X \setminus \mathcal{A}_{\frac{T}{2}}$ and $s\geq T$ such that
\begin{equation} \label{nostra ip}
u_T(\phi_{s}(z)) - u_T(z) = \sum_{j\in\N}\dfrac{1}{2^j}\left( L_T(x_j,\phi_{s}(z))-L_T(x_j,z)\right) =0.
\end{equation}
Since from properties $(i)$ and $(ii)$ of Proposition \ref{proprieta L_T}, it holds 
$$
L_T(x_j,\phi_t(x))-L_T(x_j,x)\leq 0 \qquad \qquad \forall x \in X, \ j\in\N \text{ and } t\geq T,$$ 
hypothesis (\ref{nostra ip}) equals to
\begin{equation}\label{hyp contr}
L_T(x_j,\phi_s(z))-L_T(x_j,z)=0\qquad \qquad \forall j\in\N.
\end{equation}
\begin{comment}
{\color{red}{By the continuity of the function $x \mapsto L_T(x,\phi_s(z)) - L_T(x,z)$ (see point $(iv)$ of Proposition \ref{proprieta L_T}) and the density of the sequence $(x_j)_{j \in \mathbb{N}}$, we obtain that 
$$L_T(z,\phi_s(z))-L_T(z,z)=0 \Leftrightarrow L_T(z,z)=0$$}}
\end{comment}
Let $m \in \N$ be fixed. In the sequel we prove that there exists $x_{j(m)} \in \mathcal{K}$ such that
\begin{equation} \label{eccola}
L_T(x_{j(m)},\phi_s(z))<\dfrac{1}{m}.
\end{equation}
Notice that, since $\mathcal{K}$ is a dense sequence in $X$, it is always possible to take $x_{j(m)}\in\mathcal{K}$ such that $d(x_{j(m)},z)<\dfrac{1}{mM_{2T}}$. By hypothesis (\ref{flow ip}), this implies
\begin{equation}\label{oss2}
d(\phi_t(x_{j(m)}),\phi_t(z))<\dfrac{1}{m} \qquad \qquad \forall t\in[0,2T].
\end{equation}
\noindent We now explicitly exhibit a chain $C\in C_T(x_{j(m)},\phi_s(z))$ such that $$l_T(C)<\frac{1}{m}.$$
From one hand, if $s\in[T,2T]$ then
$C = (x_{j(m)},s).$ Indeed, by \eqref{oss2}, we have
$$
l_T(C) = d(\phi_s(x_{j(m)}),\phi_s(z))<\dfrac{1}{m}.
$$
From the other hand, if $s>2T$, let consider the chain $C\in C_T(x_{j(m)},\phi_s(z))$ defined as follows:
$$
C = \left((x_{j(m)},T),(\phi_T(z),s-T)\right).
$$
By (\ref{oss2}) again, it holds 
$$
l_T(C) = d(\phi_T(x_{j(m)}),\phi_T(z))+d(\phi_s(z),\phi_s(z))<\dfrac{1}{m}+0=\dfrac{1}{m}.
$$
Since $L_T(x_{j(m)},\phi_s(z)) \le l_T(C)$ for any $C \in C_T(x_{j(m)},\phi_s(z))$, inequality (\ref{eccola}) immediately follows. Consequently --see (\ref{hyp contr})-- we obtain: 
\begin{equation}
L_T(x_{j(m)},z)<\dfrac{1}{m}.
\end{equation}
By using previous inequality, we are now going to conclude that $L_{\frac{T}{2}}(z,z) = 0$. Given an arbitrary chain $C\in C_T(x_{j(m)},z)$ from $x_{j(m)}$ to $z$:
$$
C = \left((x_{j(m)},t_1),(y_2,t_2),\dots,(y_n,t_n)\right),
$$
let us consider the chain $\tilde{C}\in C_{\frac{T}{2}}(z,z)$ obtained by $C$ and defined as follows:
$$
\tilde{C} =\left( \left(z,\dfrac{T}{2}\right),\left(\phi_{\frac{T}{2}}(x_{j(m)}),t_1-\dfrac{T}{2}\right),\left(y_2,t_2\right),\dots,\left(y_n,t_n\right)\right).
$$
By (\ref{oss2}), we have 
$$
L_{\frac{T}{2}}(z,z)\leq l_{\frac{T}{2}}(\tilde{C}) = d(\phi_{\frac{T}{2}}(z),\phi_{\frac{T}{2}}(x_{j(m)})) + d(\phi_{t_1}(x_{j(m)}),y_2) + \dots+d(\phi_{t_n}(y_n),z)<\dfrac{1}{m} + l_T(C).
$$
Since $L_T(x_{j(m)},z)<\frac{1}{m}$, by taking the infimum over all possible chains in $C_T(x_{j(m)},z)$, we have
$$
L_{\frac{T}{2}}(z,z) \le \dfrac{1}{m}+\dfrac{1}{m}=\dfrac{2}{m}.
$$
From the arbitrariness of $m\in\N$, we obtain
$
L_{\frac{T}{2}}(z,z) = 0.
$ 
Since $z\in X \setminus \mathcal{A}_{\frac{T}{2}}$, previous equality gives the desired contradiction.
\end{proof}

\section{Existence of Lipschitz Lyapunov functions with $\mathcal{SCR}(\phi)$ as neutral set} \label{cinque}
For a continuous flow $\phi: X \times \R \to X$ satisfying hypothesis (\ref{flow ip}), this section is devoted to prove the existence of a Lipschitz continuous Lyapunov function which is strict outside $\mathcal{SCR}(\phi)$. The proof --see Theorem \ref{main thm}-- is preceded by two technical lemmas aiming to overcome the fact that the function (\ref{definizione u_T}) constructed in Proposition \ref{proprieta u_T} is only definitively a strict Lyapunov function for $\phi$. We then proceed to discuss some consequences of Theorem \ref{main thm} and to give various examples. \\
%\indent {\color{blue}{$\rightarrow$ Due definizioni tecniche e relativi lemmi.}} 
\indent For any $T > 0$, we start by defining
\begin{equation}\label{definizione utilde}
\begin{split}
\tilde{u}_T&:X\rightarrow\R \\
\tilde{u}_T(x)&:= \max_{s\in[0,T]}u_T(\phi_s(x))
\end{split}
\end{equation}
and \begin{equation}\label{definizione baru}
\begin{split}
\bar{u}_T&:X\rightarrow\R \\
\bar{u}_T(x)&:=\frac{1}{M_T}\int_0^{+\infty}\dfrac{e^{-s}}{M_s}\tilde{u}_T(\phi_s(x)) ds.
\end{split}
\end{equation}
where the function $s \mapsto M_s$ has been defined in (\ref{misurabile}). We remark that the above definitions are inspired by the ones given by Conley in the proof of the so-called fundamental theorem of dynamical systems, see \cite{conl33}[Chapter II, Section 5, Page 33] and [Chapter II, Section 6, Page 39]. The main properties of functions (\ref{definizione utilde}) and (\ref{definizione baru}) are presented in the next two lemmas.
\begin{lemma}\label{proprieta utilde}
Let $\phi:X\times\R\rightarrow X$ be a continuous flow on a compact metric space $(X,d)$, uniformly Lipschitz continuous on compact subsets of $[0,+\infty)$. For any fixed $T > 0$, the following properties hold:
\begin{itemize}
\item[$(i)$] $\tilde{u}_T$ is $2M_T$-Lipschitz continuous.
\item[$(ii)$] $\tilde{u}_T$ is a Lyapunov function for $\phi$.
\item[$(iii)$] $\tilde{u}_T$ is definitively strict outside $\mathcal{A}_{\frac{T}{2}}$, that is
$$
\tilde{u}_T(\phi_t(x))<\tilde{u}_T(x) \qquad \qquad
\text{for any } x\in X \setminus \mathcal{A}_{\frac{T}{2}} \text{ and } t \ge T.$$
\end{itemize}
\end{lemma}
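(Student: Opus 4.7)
The plan is to derive each of the three properties of $\tilde{u}_T$ directly from the corresponding property of $u_T$ established in Proposition \ref{proprieta u_T}, exploiting the change of variables
\[
\tilde{u}_T(\phi_t(x)) \;=\; \max_{s \in [0,T]} u_T(\phi_{s+t}(x)) \;=\; \max_{r \in [t,\, t+T]} u_T(\phi_r(x)),
\]
which turns the Lyapunov question for $\tilde{u}_T$ into a comparison of sliding windows of $u_T \circ \phi_\cdot(x)$.

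For $(i)$, I would combine the $2$-Lipschitz property of $u_T$ (Proposition \ref{proprieta u_T}$(i)$) with the Lipschitz bound \eqref{flow ip} on $[0,T]$. A routine sup-over-parameter estimate yields $|\tilde{u}_T(x) - \tilde{u}_T(y)| \le \max_{s \in [0,T]} |u_T(\phi_s(x)) - u_T(\phi_s(y))| \le 2 d(\phi_s(x), \phi_s(y)) \le 2 M_T\, d(x,y)$.

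For $(ii)$, fix $t \ge 0$ and split the interval $[t, t+T]$ into the two (possibly empty) pieces $[t, T]$ and $[T, t+T]$. On $[t,T]$ the values $u_T(\phi_r(x))$ are bounded above by $\tilde{u}_T(x)$ by definition; on $[T, t+T]$ each time $r$ satisfies $r \ge T$, so Proposition \ref{proprieta u_T}$(ii)$ gives $u_T(\phi_r(x)) \le u_T(x)$, and in turn $u_T(x) = u_T(\phi_0(x)) \le \tilde{u}_T(x)$ by evaluating the defining maximum at $s=0$. Taking the maximum over $r \in [t, t+T]$ then yields the Lyapunov inequality $\tilde{u}_T(\phi_t(x)) \le \tilde{u}_T(x)$.

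For $(iii)$, which I expect to be the key point, fix $t \ge T$ and $x \notin \mathcal{A}_{T/2}$. Now $[t, t+T] \subseteq [T, +\infty)$, so Proposition \ref{proprieta u_T}$(iii)$ applies at every time $r$ in this interval and gives the \emph{strict} pointwise inequality $u_T(\phi_r(x)) < u_T(x)$. Since the map $r \mapsto u_T(\phi_r(x))$ is continuous (composition of the continuous flow with the Lipschitz function $u_T$) and $[t, t+T]$ is compact, the supremum is attained at some $r^* \in [t, t+T]$, and I conclude $\tilde{u}_T(\phi_t(x)) = u_T(\phi_{r^*}(x)) < u_T(x) \le \tilde{u}_T(x)$. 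The main (minor) obstacle is precisely this last step: promoting the strict pointwise inequality to a strict inequality for the supremum, which would fail for a generic sup of strictly decreasing functions but works here thanks to compactness of $[t,t+T]$ and continuity of $r \mapsto u_T(\phi_r(x))$.
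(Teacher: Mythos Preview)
Your proof is correct and follows essentially the same route as the paper: part $(i)$ via the inequality $\max f - \max g \le \max(f-g)$ combined with the $2$-Lipschitz bound on $u_T$ and \eqref{flow ip}; part $(ii)$ by observing that $u_T(\phi_\tau(x)) \le \tilde u_T(x)$ for all $\tau \ge 0$ (the paper states this in one line, citing only Proposition~\ref{proprieta u_T}$(ii)$, whereas you make the harmless case split $\tau \in [0,T]$ versus $\tau \ge T$ explicit); and part $(iii)$ by the chain $\tilde u_T(\phi_t(x)) = \max_{s\in[0,T]} u_T(\phi_{s+t}(x)) < u_T(x) \le \tilde u_T(x)$, where your invocation of compactness to attain the maximum is exactly what the paper's use of ``$\max$'' presupposes.
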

\begin{proof} Thanks to property $(i)$ of Proposition \ref{proprieta u_T} and assumption (\ref{flow ip}) on the flow, for any $x,y \in X$
$$
\tilde{u}_T(y)-\tilde{u}_T(x) = \max_{s\in[0,T]}u_T(\phi_s(y)) -\max_{s\in[0,T]}u_T(\phi_s(x))\leq \max_{s\in[0,T]}\left( u_T(\phi_s(y))-u_T(\phi_s(x)) \right)\leq
$$
$$
\leq 2\max_{s\in[0,T]} d(\phi_s(y),\phi_s(x)) \leq 2M_Td(x,y).
$$
We conclude by exchanging the role of $x$ and $y$. \\
$(ii)$ By property $(ii)$ of Proposition \ref{proprieta u_T}, we know that 
$$
u_T(\phi_{\tau}(x))\leq \max_{s\in[0,T]}u_T(\phi_s(x)) \qquad \qquad \forall x \in X \text{ and }\tau \ge 0.
$$
By taking the maximum for $\tau \in [t,T+t]$, we obtain
$$
\max_{\tau\in[t,T+t]}u_T(\phi_{\tau}(x))\leq\max_{s\in[0,T]}u_T(\phi_s(x)) = \tilde{u}_T(x)
$$
Since now
$$\max_{\tau\in[t,T+t]}u_T(\phi_{\tau}(x)) = \max_{s\in[0,T]}u_T(\phi_{s+t}(x)) = \tilde{u}_T(\phi_t(x)),$$
the thesis immediately follows. \\
$(iii)$ By property $(iii)$ of Proposition \ref{proprieta u_T}, for any $x\in X\setminus\mathcal{A}_{\frac{T}{2}}$ and $t \ge T$, we have
$$
\tilde{u}_T(\phi_t(x))=\max_{s\in[0,T]}u_T(\phi_{s+t}(x)) < u_T(x)\leq \max_{s\in[0,T]}u_T(\phi_s(x)) = \tilde{u}_T(x).
$$

\end{proof}
\begin{lemma}\label{proprieta baru}
Let $\phi:X\times\R\rightarrow X$ be a continuous flow on a compact metric space $(X,d)$, uniformly Lipschitz continuous on compact subsets of $[0,+\infty)$. For any fixed $T > 0$, the following properties hold:
\begin{itemize}
\item[$(i)$] $\bar{u}_T$ is $2$-Lipschitz continuous.
\item[$(ii)$] $\bar{u}_T$ is a Lyapunov function for $\phi$.
\item[$(iii)$] $\bar{u}_T$ is strict outside $\mathcal{A}_{\frac{T}{2}}$ for any $t > 0$.
\end{itemize}
\end{lemma}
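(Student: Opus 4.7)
The plan is to verify the three properties by manipulating the integral defining $\bar{u}_T$ and invoking the corresponding properties of $\tilde{u}_T$ established in Lemma \ref{proprieta utilde}. For \emph{(i)}, I combine the $2M_T$-Lipschitz bound on $\tilde{u}_T$ with the flow estimate $d(\phi_s(x),\phi_s(y))\le M_s\, d(x,y)$ from (\ref{flow ip}): the factor $2M_T M_s$ cancels against the weight $\frac{1}{M_T M_s}$ in the integrand, yielding $|\bar{u}_T(x)-\bar{u}_T(y)|\le 2 d(x,y)\int_0^{+\infty}e^{-s}\,ds = 2\,d(x,y)$. For \emph{(ii)}, I use $\phi_s\circ\phi_t=\phi_{s+t}$ and the pointwise inequality $\tilde{u}_T(\phi_{s+t}(x))\le\tilde{u}_T(\phi_s(x))$, valid for $t\ge 0$ by Lemma \ref{proprieta utilde}(ii), under the integral sign, concluding $\bar{u}_T(\phi_t(x))\le\bar{u}_T(x)$.

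For \emph{(iii)}, fix $x\in X\setminus\mathcal{A}_{T/2}$ and $t>0$. Using $\phi_s\circ\phi_t=\phi_{s+t}$ I would write
\begin{equation*}
\bar{u}_T(x)-\bar{u}_T(\phi_t(x))=\frac{1}{M_T}\int_0^{+\infty}\dfrac{e^{-s}}{M_s}\bigl[\tilde{u}_T(\phi_s(x))-\tilde{u}_T(\phi_{s+t}(x))\bigr]\,ds,
\end{equation*}
whose integrand is nonnegative by Lemma \ref{proprieta utilde}(ii). It is therefore enough to find a single $s_0\ge 0$ at which the integrand is strictly positive, since continuity of $h(s):=\tilde{u}_T(\phi_s(x))$ (composition of the $2M_T$-Lipschitz map $\tilde{u}_T$ with the continuous flow) then forces strict positivity on an open neighborhood of $s_0$, making the integral strictly positive. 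To locate $s_0$, I argue by contradiction: if $h(s)=h(s+t)$ for every $s\ge 0$, then since $h$ is non-increasing (by the Lyapunov property), it must be constant on $[0,t]$, and iterating on intervals $[nt,(n+1)t]$ it is constant on $[0,+\infty)$. This contradicts Lemma \ref{proprieta utilde}(iii) applied at time $\tau=T$, which gives $h(T)=\tilde{u}_T(\phi_T(x))<\tilde{u}_T(x)=h(0)$.

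The main obstacle is precisely the upgrade in part \emph{(iii)} from the \emph{definitive} strictness available for $\tilde{u}_T$ (only for $t\ge T$) to \emph{genuine} strictness of $\bar{u}_T$ for every $t>0$. The exponentially weighted time-average smooths $\tilde{u}_T$ over all positive time scales simultaneously, and the contradiction argument above exploits continuity and monotonicity of $s\mapsto \tilde{u}_T(\phi_s(x))$ to convert a single guaranteed drop at time $T$ into a strict decrease at every positive time.
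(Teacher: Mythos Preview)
Your proof is correct and follows essentially the same approach as the paper. Parts \emph{(i)} and \emph{(ii)} are identical to the paper's argument. For \emph{(iii)}, the paper locates the point $s_0$ explicitly as $\bar{s}:=\max\{s\ge 0:\tilde{u}_T(\phi_s(x))=\tilde{u}_T(x)\}$, observes $\bar{s}<T$ by Lemma~\ref{proprieta utilde}(iii), and then notes that $\tilde{u}_T(\phi_{\bar{s}+t}(x))<\tilde{u}_T(\phi_{\bar{s}}(x))$ for any $t>0$; you reach the same conclusion by contradiction, showing that $h(s)=h(s+t)$ for all $s$ would force $h$ constant against $h(T)<h(0)$. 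Both routes reduce to exhibiting one point where the (continuous, nonnegative) integrand is strictly positive.
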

\begin{proof} $(i)$ By property $(i)$ of Lemma \ref{proprieta utilde} and assumption (\ref{flow ip}) on the flow, we immediately obtain that for all $x,y\in X$
$$
\bar{u}_T(y)-\bar{u}_T(x) = \frac{1}{M_T}\int_0^{+\infty}\dfrac{e^{-s}}{M_s}\left( \tilde{u}_T(\phi_s(y))-\tilde{u}_T(\phi_s(x)) \right)ds\leq2\int_0^{+\infty}\dfrac{e^{-s}}{M_s} d(\phi_s(y),\phi_s(x))ds\leq 2 d(y,x).
$$
We conclude by exchanging the role of $x$ and $y$. \\
$(ii)$ The statement is a direct consequence of property $(ii)$ of Lemma \ref{proprieta utilde}. Indeed, for any $x \in X$ and $t\geq 0$, it clearly holds
$$
\bar{u}_T(\phi_t(x))-\bar{u}_T(x) = \frac{1}{M_T}\int_0^{+\infty}\dfrac{e^{-s}}{M_s}\left( \tilde{u}_T(\phi_{s+t}(x))-\tilde{u}_T(\phi_s(x)) \right)ds\leq 0
$$
$(iii)$ By definition,
$$
\bar{u}_T(\phi_t(x))-\bar{u}_T(x) = \frac{1}{M_T} \int_0^{+\infty} \dfrac{e^{-s}}{M_s}\left( \tilde{u}_T(\phi_{s+t}(x))-\tilde{u}_T(\phi_s(x)) \right) ds.
$$
\noindent Let us introduce
$$
\bar{s}:= \max\{ s\in[0,+,\infty]:\ \tilde{u}_T(\phi_s(x))=\tilde{u}_T(x) \}
$$
and notice that --by property $(iii)$ of Lemma \ref{proprieta utilde}-- $\bar{s}\in[0,T)$. Consequently, for any $x\in X\setminus\mathcal{A}_{\frac{T}{2}}$ and $t>0$, we have\begin{equation*}
\tilde{u}_T(\phi_{\bar{s}+t}(x))<\tilde{u}_T(x)=\tilde{u}_T(\phi_{\bar{s}}(x)).
\end{equation*}
By the previous strict inequality, we deduce that 
$$
\bar{u}_T(\phi_t(x))-\bar{u}_T(x)<0 \qquad \qquad \forall x \in x\in X\setminus\mathcal{A}_{\frac{T}{2}} \text{ and } t>0.
$$
\end{proof}
%\indent {\color{blue}$\rightarrow$ Dimostrazione teorema principale.} 
\indent We finally prove 
\begin{teorema}\label{main thm}
Let $\phi:X\times\R\rightarrow X$ be a continuous flow on a compact metric space $(X,d)$, uniformly Lipschitz continuous on compact subsets of $[0,+\infty)$. Then 
\begin{equation} \label{final}
\begin{split}
u&:X\rightarrow\R \\
x\mapsto u(x)&:= \sum_{n\in\N}\dfrac{1}{2^n}\bar{u}_n(x)
\end{split}
\end{equation}
is a Lipschitz continuous Lyapunov function for $\phi$ such that
$
\mathcal{N}(u) =\mathcal{SCR}(\phi).
$
\end{teorema}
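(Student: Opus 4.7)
\noindent The plan is to check three things about $u$: Lipschitz continuity, the Lyapunov inequality $u\circ\phi_t\leq u$ for $t\geq 0$, and the identity $\mathcal{N}(u)=\mathcal{SCR}(\phi)$. The first two are essentially bookkeeping on top of Lemma~\ref{proprieta baru}: each $\bar{u}_n$ is $2$-Lipschitz, uniformly bounded in $n$ by $2\,\mathrm{diam}(X)$, and a Lyapunov function for $\phi$, so the series with geometric weights $\tfrac{1}{2^n}$ converges uniformly to a Lipschitz continuous Lyapunov function (the Lipschitz constant of $u$ is at most $2\sum_n 2^{-n}$).

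\noindent The substantive content is the neutral-set identity, which I would prove by two opposite inclusions. For $\mathcal{SCR}(\phi)\subseteq\mathcal{N}(u)$, fix $x\in\mathcal{SCR}(\phi)$. Proposition~\ref{invarianza}~$(i)$ gives $L_T(x,\phi_t(x))=L_T(\phi_t(x),x)=0$ for every $T>0$ and every $t\in\R$. Inserting this into the inequality $u_T(y)-u_T(x)\le 2 L_T(x,y)$ that appears inside the proof of Proposition~\ref{proprieta u_T}~$(ii)$ (applied once with $y=\phi_t(x)$ and once with the roles reversed) yields $u_T(\phi_t(x))=u_T(x)$ for every $t\in\R$. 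Thus $u_T$ is constant along the orbit through $x$; the maximum structure of $\tilde{u}_T$ inherits this constancy, and then so does the integral defining $\bar{u}_T$. Summing over $n$ one obtains $u(\phi_t(x))=u(x)$ for all $t\in\R$, so $x\in\mathcal{N}(u)$.

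\noindent For the reverse inclusion I argue contrapositively. If $x\notin\mathcal{SCR}(\phi)$, then by (\ref{caratterizzazione scr at}) there is some $T_0>0$ with $x\notin\mathcal{A}_{T_0}$. Choose $n\in\N$ with $n\ge 2T_0$; since $T\mapsto\mathcal{A}_T$ is monotone decreasing, $\mathcal{A}_{n/2}\subseteq\mathcal{A}_{T_0}$, hence $x\notin\mathcal{A}_{n/2}$. Lemma~\ref{proprieta baru}~$(iii)$ then produces $\bar{u}_n(\phi_t(x))<\bar{u}_n(x)$ for every $t>0$, while $\bar{u}_m(\phi_t(x))\le\bar{u}_m(x)$ for the remaining indices by the Lyapunov property. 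The strict inequality on the $n$-th term survives the positive weight $\tfrac{1}{2^n}$:
\[
u(\phi_t(x))-u(x)\le\tfrac{1}{2^n}\bigl(\bar{u}_n(\phi_t(x))-\bar{u}_n(x)\bigr)<0\qquad\forall\,t>0,
\]
so $x\notin\mathcal{N}(u)$.

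\noindent The real conceptual point is the $\mathcal{SCR}\subseteq\mathcal{N}(u)$ direction: the \emph{symmetric} vanishing of $L_T$ along orbits guaranteed by Proposition~\ref{invarianza} is precisely what allows $u_T$, a weighted sum of functions $z\mapsto L_T(x_j,z)$, to be orbit-constant rather than merely orbit-monotone; without the backward equality $L_T(\phi_t(x),x)=0$ one would obtain only $u_T(\phi_t(x))\le u_T(x)$. The rest of the architecture --- the three successive constructions $u_T$, $\tilde{u}_T$, $\bar{u}_T$ and the outer sum in $n$ --- is calibrated so that the converse inclusion is a simple matter of selecting a scale $n\ge 2T_0$ on which Lemma~\ref{proprieta baru}~$(iii)$ becomes strict at $x$.
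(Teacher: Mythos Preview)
Your proof is correct and follows the paper's architecture for Lipschitz continuity, the Lyapunov property, and the inclusion $\mathcal{N}(u)\subseteq\mathcal{SCR}(\phi)$ (the contrapositive argument via Lemma~\ref{proprieta baru}$(iii)$ is identical to the paper's). The one genuine difference is in the inclusion $\mathcal{SCR}(\phi)\subseteq\mathcal{N}(u)$: the paper simply invokes an external result (Proposition~1.6 in \cite{abbbercar}, stating that the neutral set of any Lipschitz Lyapunov function contains $\mathcal{SCR}(\phi)$), whereas you give a self-contained argument using Proposition~\ref{invarianza}$(i)$ together with the domination inequality $u_T(y)-u_T(x)\le 2L_T(x,y)$ to show that $u_T$, hence $\tilde{u}_T$, $\bar{u}_T$, and finally $u$, are constant along orbits through strong chain recurrent points. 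Your route has the advantage of keeping everything internal to the paper's own constructions and making explicit why the \emph{two-sided} vanishing $L_T(x,\phi_t(x))=L_T(\phi_t(x),x)=0$ is exactly what is needed; the paper's citation is shorter but hides this mechanism.
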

\begin{proof} The Lipschitz continuity of $u$ is a direct consequence of property $(i)$ of Lemma \ref{proprieta baru}. Indeed, for any $x,y\in X$, we have
$$
u(y)-u(x)=\sum_{n\in\N}\dfrac{1}{2^n}\left( \bar{u}_n(y)-\bar{u}_n(x) \right)\leq 4d(x,y)
$$
and we conclude by exchanging the role of $x,y$.\\
\noindent Moreover, from property $(ii)$ of Lemma \ref{proprieta baru}, it holds
$$
u(\phi_t(x))-u(x)=\sum_{n\in\N}\dfrac{1}{2^n}\left(\bar{u}_n(\phi_t(x))-\bar{u}_n(x)\right)\leq 0\qquad \qquad \forall x \in X \text{ and } t \ge 0.$$
This means that $u$ is a Lyapunov function for $\phi$. \\
Finally, let $x \in X \setminus \mathcal{SCR}(\phi)$ and $t > 0$. We first notice that the strong chain recurrent set for $\phi$ --see (\ref{caratterizzazione scr at})-- can be equivalently  expressed as $\mathcal{SCR}(\phi) = \bigcap_{n \in \mathbb{N}}\mathcal{A}_{\frac{n}{2}}$. %{\color{red}Indeed, let $x\in\bigcap_{n\in\N}\mathcal{A}_{\frac{n}{2}}$ and fix $T\geq 0$. There exists $\tilde{n}\in\N$ such that $T\leq \frac{\tilde{n}}{2}$ and so --from what observed on $\mathcal{A}_T$ in Section \ref{section 2}-- one has $\mathcal{A}_{\frac{\tilde{n}}{2}}\subseteq\mathcal{A}_T$. Since $x\in\mathcal{A}_{\frac{n}{2}}$ for any $n\in\N$ --so in particular for $\tilde{n}$-- and because of the arbitrariness of $T$, we easily deduce the set equality.}\\
\noindent As a consequence, if $x\in X\setminus\mathcal{SCR}(\phi)= X\setminus\left( \bigcap_{n \in \mathbb{N}}\mathcal{A}_{\frac{n}{2}} \right)=\bigcup_{n \in \mathbb{N}}\left( X\setminus\mathcal{A}_{\frac{n}{2}} \right)$, then there exists $\bar{n}\in\N$ such that $x\in X\setminus\mathcal{A}_{\frac{\bar{n}}{2}}$. By property $(iii)$ of Lemma \ref{proprieta baru}, we then have
$$
\bar{u}_{\bar{n}}(\phi_t(x))-\bar{u}_{\bar{n}}(x)<0
$$
and therefore
$$u(\phi_t(x))-u(x) = \sum_{\substack{n\in\N \\ n\neq\bar{n}}}\dfrac{1}{2^n}\left(\bar{u}_n(\phi_t(x))-\bar{u}_n(x)\right) + \dfrac{1}{2^{\bar{n}}}\left(\bar{u}_{\bar{n}}(\phi_t(x))-\bar{u}_{\bar{n}}(x)\right)<0.
$$
This proves that $u$ is strict outside $\mathcal{SCR}(\phi)$, that is $\mathcal{N}(u) \subseteq \mathcal{SCR}(\phi)$. However, by Proposition 1.6 in \cite{abbbercar}, the neutral set of every Lipschitz continuous Lyapunov function for $\phi$ contains the strong chain recurrent set of $\phi$, so $\mathcal{SCR}(\phi) \subseteq \mathcal{N}(u)$. We then conclude that $
\mathcal{N}(u) =\mathcal{SCR}(\phi)$. 
\end{proof}
%\indent {\color{blue}$\rightarrow$ Primo corollario.} 
\indent The above theorem implies the following characterization of the strong chain recurrent set in terms of Lipschitz continuous Lyapunov functions. 
\begin{corollario} \label{altra dim}
Let $\phi:X\times\R\rightarrow X$ be a continuous flow on a compact metric space $(X,d)$, uniformly Lipschitz continuous on the compact subsets of $[0,+\infty)$. Then, 
\begin{equation*}
\mathcal{SCR}(\phi)=\bigcap_{f\in\mathcal{L}(\phi)}\mathcal{N}(f).
\end{equation*}
where $\mathcal{L}(\phi)$ denotes the set of all Lipschitz continuous Lyapunov functions for $\phi$. 
\end{corollario}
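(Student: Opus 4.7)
The plan is to prove the two inclusions separately, with one direction being essentially immediate from the main theorem just proved and the other direction being a known fact cited from the literature.

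For the inclusion $\mathcal{SCR}(\phi) \subseteq \bigcap_{f \in \mathcal{L}(\phi)} \mathcal{N}(f)$, I would invoke Proposition 1.6 in \cite{abbbercar}, already used at the end of the proof of Theorem \ref{main thm}, which asserts that for every Lipschitz continuous Lyapunov function $f$ for $\phi$ the neutral set $\mathcal{N}(f)$ contains $\mathcal{SCR}(\phi)$. Taking the intersection over all such $f \in \mathcal{L}(\phi)$ preserves the inclusion and yields the first half of the statement.

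For the reverse inclusion $\bigcap_{f \in \mathcal{L}(\phi)} \mathcal{N}(f) \subseteq \mathcal{SCR}(\phi)$, the key observation is that Theorem \ref{main thm} already exhibits a particular element $u \in \mathcal{L}(\phi)$ whose neutral set is precisely $\mathcal{SCR}(\phi)$. Since intersecting over a set of Lyapunov functions can only shrink or preserve any particular $\mathcal{N}(f)$, we get
\begin{equation*}
\bigcap_{f \in \mathcal{L}(\phi)} \mathcal{N}(f) \subseteq \mathcal{N}(u) = \mathcal{SCR}(\phi),
\end{equation*}
which completes the argument.

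There is no real obstacle here: the corollary is the immediate combination of Theorem \ref{main thm} with the already-established fact that strong chain recurrent points lie in the neutral set of every Lipschitz Lyapunov function. The whole content of this corollary is that the specific Lipschitz Lyapunov function constructed in the previous section is ``extremal'' in the sense that it realizes the infimum of all possible neutral sets, thereby giving the variational characterization of $\mathcal{SCR}(\phi)$ first obtained in \cite{abbbercar} via different (non-constructive) methods. I would therefore present the proof as a short two-inclusion argument, explicitly pointing out that the novelty is the constructive nature of the proof of the nontrivial inclusion, in contrast to the approach of \cite{abbbercar}.
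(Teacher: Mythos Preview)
Your proposal is correct and follows exactly the same approach as the paper: one inclusion via Proposition 1.6 of \cite{abbbercar}, the other via the existence of the specific Lipschitz Lyapunov function $u$ from Theorem \ref{main thm} with $\mathcal{N}(u)=\mathcal{SCR}(\phi)$. Your concluding remark about the constructive nature of the argument versus the approach of \cite{abbbercar} also mirrors the paper's own commentary.
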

\begin{proof} By Proposition 1.6 in \cite{abbbercar}, the neutral set of every Lipschitz continuous Lyapunov function for $\phi$ contains the strong chain recurrent set of $\phi$, that is 
\begin{equation*}
\mathcal{SCR}(\phi) \subseteq \bigcap_{f\in\mathcal{L}(\phi)}\mathcal{N}(f).
\end{equation*}
Since --by Theorem \ref{main thm}-- the function $u$ defined in (\ref{final}) is such that $\mathcal{SCR}(\phi) = \mathcal{N}(u)$, the thesis immediately follows.
\end{proof}
\noindent We stress that the corollary above has been already proved in \cite{abbbercar}[Theorem 2.2]. However, in \cite{abbbercar} the inclusion $\supseteq$ is obtained by an accurate adaptation of Fathi and Pageault's techniques. While, in our proof, it is an immediate consequence of the existence of the Lyapunov function (\ref{final}). \\
~\newline
%\indent {\color{blue}$\rightarrow$ Secondo corollario.} 
\indent Let now $(M,g)$ be a $C^{\infty}$ closed connected Riemannian manifold and $V: M \to TM$ be a $C^k$ vector field, $k \ge 2$, inducing the flow $\phi: \R \times M \to M$. Denote by $\| v \|_x$ the norm of an element $v \in T_xM$ relatively to the metric $g$ and introduce the Mañé Hamiltonian
$$H_V: T^*M \to \R, \qquad H_V(x,p) = \frac{1}{2} \| p \|^2_x + p(V(x)).$$
We indicate by $\mathcal{A}_V$ the projected Aubry set associated to $H_V$. Since the constant functions are solutions of the Hamilton-Jacobi equation $H_V(x,d_xv) = 0$, the Mañé critical level is $c(H_V) = 0$. From one hand, it holds that $\mathcal{A}_V \subseteq \mathcal{CR}(\phi)$, see e.g. \cite{mane}[Section I, Theorem V], \cite{pageault}[Introduction, Section 3, Proposition 3.2] and {\cite{bernardconley}[Corollary 2]}. From the other hand, we know that there exists a $C^{1,1}$ critical sub-solution $u:M \to \R$ of $H_V(x,d_xv) = 0$ which is strict outside $\mathcal{A}_V$,
%$$\frac{1}{2} \| d_xu \|^2_x + d_xu(V(x)) \le 0,$$
see \cite{bernardexistence}[Lemma 7]. In particular, $d_xu(V(x)) \le 0$ and therefore $u$ is a $C^{1,1}$ Lyapunov function for $\phi$. Since $u$ is strict outside $\mathcal{A}_{V}$, it holds $\mathcal{N}(u)\subseteq\mathcal{A}_V$. Furthermore, recall that for any $x\in\mathcal{A}_V$ and any sub-solution {\color{red}{$v$}}
\begin{equation}
\frac{1}{2} \| d_xv \|^2_x + d_xv(V(x)) \le 0,
\end{equation}
$d_xv$ does not depend on $v$ (see \cite{alfonso}[Proposition 5.1.23]). Consequently, since the constant functions are (sub-)solutions of $H(x,d_xv)=0$, we have $d_xv=0$ on $\mathcal{A}_V$. In particular, this implies $d_xu(V(x))=0$ for all $x\in\mathcal{A}_V$ or equivalently $\mathcal{A}_V\subseteq\mathcal{N}(u)$. Summarizing, $u$ is a $\mathcal{C}^{1,1}$ Lyapunov function for $\phi$ with $\mathcal{N}(u)=\mathcal{A}_V$. Therefore, by the previous corollary, $\mathcal{SCR}(\phi) \subseteq \mathcal{A}_V$. From both inclusions, we conclude that in general
\begin{equation}\label{relazione Mane generale}
\mathcal{SCR}(\phi) \subseteq \mathcal{A}_V \subseteq \mathcal{CR}(\phi).
\end{equation}

\begin{corollario}\label{corollario mane}

%\marginpar{\color{red}{Il fatto che sia completo dovrebbe essere conseguenza di V autonomo e almeno $C^2$, referenza www.cds.caltech.edu/~marsden/wiki/uploads/cds140a-09/lecturenotes/VectorFieldsFlows.pdf Dobbiamo inserirla?}}
If $\mathcal{SCR}(\phi)=\mathcal{CR}(\phi)$ then there exists a ${\cal{C}}^{1,1}$ Lyapunov function such that $
\mathcal{N}(u) =\mathcal{SCR}(\phi).
$
\end{corollario}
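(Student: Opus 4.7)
The plan is to observe that essentially all of the required work has been assembled in the discussion immediately preceding the corollary, so the proof reduces to a short squeeze argument. Indeed, the excerpt exhibits a $C^{1,1}$ function $u:M\to\R$ (obtained from Bernard's existence result for critical sub-solutions of the Mañé Hamiltonian $H_V$) which satisfies $d_xu(V(x))\le 0$ everywhere, is strict outside the projected Aubry set $\mathcal{A}_V$, and has differential vanishing on $\mathcal{A}_V$. This gives that $u$ is a $C^{1,1}$ Lyapunov function for $\phi$ with $\mathcal{N}(u)=\mathcal{A}_V$, independently of any hypothesis relating $\mathcal{SCR}(\phi)$ and $\mathcal{CR}(\phi)$.

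The steps I would carry out are therefore: first, invoke Bernard's lemma to produce $u$, then record the two one-sided inclusions already justified in the excerpt --- namely, that $u$ being strict outside $\mathcal{A}_V$ yields $\mathcal{N}(u)\subseteq\mathcal{A}_V$, and that the uniqueness of the differential of sub-solutions on the Aubry set together with the fact that constants solve $H_V(x,d_xv)=0$ forces $d_xu\equiv 0$ on $\mathcal{A}_V$, hence $\mathcal{A}_V\subseteq\mathcal{N}(u)$. Combined, these give $\mathcal{N}(u)=\mathcal{A}_V$. Finally, apply the sandwich
\begin{equation*}
\mathcal{SCR}(\phi)\subseteq \mathcal{A}_V\subseteq \mathcal{CR}(\phi),
\end{equation*}
established as formula (\ref{relazione Mane generale}) in the excerpt: under the hypothesis $\mathcal{SCR}(\phi)=\mathcal{CR}(\phi)$, both inclusions collapse to equalities, so $\mathcal{A}_V=\mathcal{SCR}(\phi)$ and consequently $\mathcal{N}(u)=\mathcal{SCR}(\phi)$, which is the conclusion of the corollary.

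There is no real obstacle here: the only non-trivial inputs --- existence of a $C^{1,1}$ critical sub-solution strict outside the Aubry set, and the rigidity of differentials of sub-solutions on $\mathcal{A}_V$ --- have been quoted from Bernard and from Fathi's weak KAM theory, and the corollary is presented precisely to package this discussion as a clean sufficient condition. The only point worth flagging is that the existence of the relevant $C^{1,1}$ sub-solution requires the regularity hypotheses on $(M,g)$ and on $V$ stated in Corollary 2 of the introduction (namely $V\in C^k$ with $k\ge 2$ on a closed Riemannian manifold), which guarantee that Bernard's framework applies to $H_V$; this is what justifies invoking \cite{bernardexistence} at the start of the argument.
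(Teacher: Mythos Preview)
Your proposal is correct and follows essentially the same approach as the paper's own proof: you use the $C^{1,1}$ critical sub-solution $u$ with $\mathcal{N}(u)=\mathcal{A}_V$ produced in the preceding discussion, and then the sandwich $\mathcal{SCR}(\phi)\subseteq\mathcal{A}_V\subseteq\mathcal{CR}(\phi)$ collapses under the hypothesis $\mathcal{SCR}(\phi)=\mathcal{CR}(\phi)$ to give $\mathcal{N}(u)=\mathcal{SCR}(\phi)$. The paper's proof is slightly more terse but identical in substance.
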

\begin{proof} If $\mathcal{SCR}(\phi) = \mathcal{CR}(\phi)$ then necessarily $\mathcal{SCR}(\phi) = \mathcal{A}_V = \mathcal{CR}(\phi)$ and the $C^{1,1}$ critical sub-solution $u:M \to \R$ of $H_V(x,d_xu) = 0$ which is strict outside $\mathcal{A}_V$ gives the desired Lyapunov function with $
\mathcal{N}(u) =\mathcal{SCR}(\phi).$
\end{proof}
%\indent {\color{blue}$\rightarrow$ Istruttivi esempi.} 
\indent Next basic Example \ref{41} shows two different smooth flows on $\mathbb{T}^1 = \R/\Z$, having the following properties: 
\begin{itemize}
\item[$(i)$]  For the first flow, the hypothesis of Corollary \ref{corollario mane} holds and therefore it exists a differentiable Lyapunov function which is strict outside the strong chain recurrent set. 
\item[$(ii)$] For the second flow, we have that $\mathcal{SCR}(\phi) \subsetneqq \mathcal{A}_V = \mathbb{T}^1$. In such a case, even if every sub-solution given by weak KAM theory provides a first integral for $\phi$, the dynamical system admits a differentiable Lyapunov function strict outside $\mathcal{SCR}(\phi)$.
\end{itemize}
\begin{esempio} \label{41} On the circle $\mathbb{T}^1 = \R/\Z$ equipped with the standard quotient metric, consider the dynamical systems of Figures \ref{figura1} and \ref{figura2}, where the bold line and the arrows denote respectively fixed points and the direction of the flow $\phi$. \\
Relatively to the dynamical system of Figure \ref{figura1}, the hypothesis of Corollary \ref{corollario mane} is satisfied since $\mathcal{SCR}(\phi) = \mathcal{CR}(\phi) = Fix(\phi)$ and it clearly exists a $C^{1,1}$ (also $C^{\infty}$) Lyapunov function whose neutral set is the strong chain recurrent set. \\
Otherwise, in the case of the dynamical system of Figure \ref{figura2}, $\mathcal{SCR}(\phi) = Fix(\phi)$ and $\mathcal{CR}(\phi) = \mathbb{T}^1$. Moreover --see point $(i)$ of Theorem 4.8 and point $(i)$ of Lemma 4.14 in \cite{fathifigallirifford}--  the projected Aubry set associated to the corresponding Mañé Hamiltonian coincides with $\mathcal{CR}(\phi) = \mathbb{T}^1$. Consequently, every sub-solutions given by weak KAM theory is a first integral but --according to Theorem \ref{main thm}-- it clearly exists a Lipschitz continuous (even smooth) Lyapunov function strict outside $Fix(\phi)$. 
\end{esempio}

\begin{figure}[h]
\centering
\begin{minipage}[c]{.3\textwidth}
\centering
\begin{tikzpicture}
\draw (3,3) circle (15mm);
\draw [ultra thick] (3,4.5) arc (90:180:1.5);
\draw [ultra thick] (3,1.5) arc (-90:0:1.5);
\draw [-latex] (3,4.5) arc (90:45:1.5);
\draw [-latex] (1.5,3) arc (-180:-135:1.5);
\end{tikzpicture}
\caption{First dynamical system of Example \ref{41}.}
\label{figura1}
\end{minipage}
\hspace{75pt}
\begin{minipage}[c]{.3\textwidth}
\centering
\begin{tikzpicture}
\draw (3,3) circle (15mm);
\draw [ultra thick] (3,4.5) arc (90:180:1.5);
\draw [ultra thick] (3,1.5) arc (-90:0:1.5);
\draw [-latex] (3,4.5) arc (90:45:1.5);
\draw [-latex] (3,1.5) arc (-90:-135:1.5);
\end{tikzpicture}
\caption{Second dynamical system of Example \ref{41}.}
\label{figura2}
\end{minipage}
\end{figure}

\noindent Another case where sub-solutions of weak KAM theory provide first integrals while the Lyapunov function of Theorem \ref{main thm} is not a first integral, is discussed in the next 
\begin{esempio}\label{es2} 
On $\mathbb{T}^2=\R^2/\Z^2$ endowed with the standard quotient metric, consider the flow $\phi$ associated to the vector field (see Figure \ref{figuraMM}):
\begin{equation} \label{arnaud}
V(x,y) = (f(x),1)
\end{equation}
where
\begin{equation*}
f(x) =
\begin{cases}
\cos(4\pi x) + 1 & \qquad x \in \left[0,\frac{1}{4}\right] \cup \left[\frac{3}{4},1\right) \\
0 & \qquad x \in \left[\frac{1}{4},\frac{3}{4}\right]
\end{cases}
\end{equation*}
\noindent In such a case, the strong chain recurrent set coincides with the set of periodic points, that is
$$\mathcal{SCR}(\phi) = \left[\frac{1}{4},\frac{3}{4}\right]\times\mathbb{T}^1$$
and $\mathcal{CR}(\phi) = \mathbb{T}^2$. Moreover --by points $(i)$ of Theorem 4.8 and $(i)$ of Lemma 4.14 in \cite{fathifigallirifford} again-- the projected Aubry set associated to the Mañé Hamiltonian $$\frac{1}{2} (p_1^2 + p_2^2) + f(x)p_1 + p_2$$ is $\mathcal{A}_V=\mathcal{CR}(\phi) = \mathbb{T}^2$. Consequently, every sub-solutions given by weak KAM theory is a first integral for $\phi$ while function (\ref{final}) of Theorem \ref{main thm} gives a Lyapunov function for $\phi$ which is strict outside $\left[\frac{1}{4},\frac{3}{4}\right]\times\mathbb{T}^1$. 
\begin{figure}[h]
\centering
\includegraphics[scale=.50]{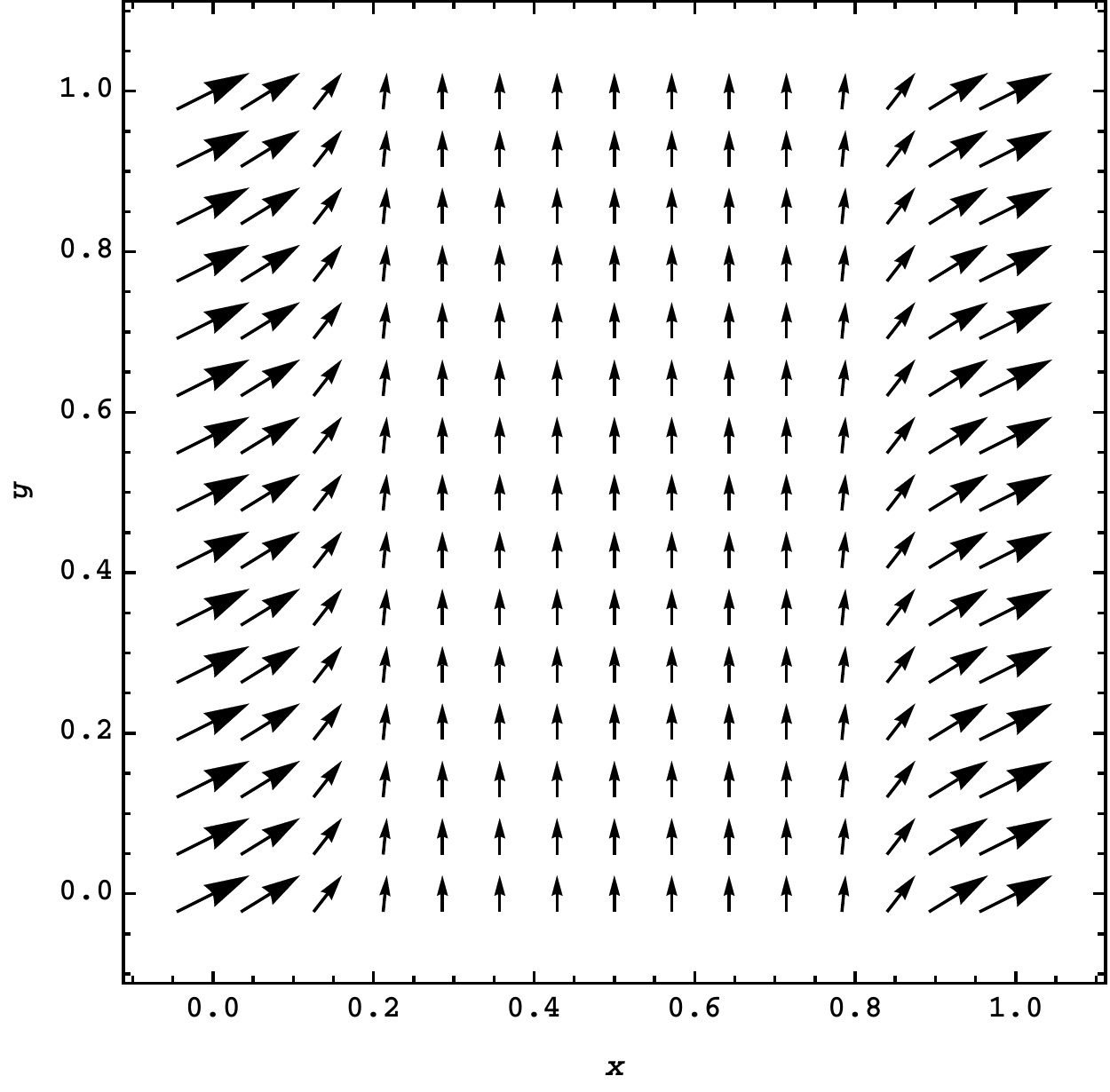}
\caption{The vector field (\ref{arnaud}).}
\label{figuraMM}
\end{figure}
\end{esempio}
%\indent {\color{blue}{$\rightarrow$ Commento finale sull'inverso del Corollary \ref{corollario mane} ed anticipazione.}} 
\indent We finally remark that the converse implication of Corollary \ref{corollario mane} does not in general hold true. For example, consider the dynamical system of Figure \ref{figura2}. In such a case, it is easy to construct a smooth Lyapunov function which is strict outside $\mathcal{SCR}(\phi) = Fix(\phi)$. However, $\mathcal{SCR}(\phi) \subsetneqq \mathcal{CR}(\phi) = \mathbb{T}^1$. A necessary and sufficient condition for the strong chain recurrent and chain recurrent sets to be equal is proved in Proposition \ref{ultima car} of the next section. 

%\begin{figure}[h]
%\centering
%\includegraphics[scale=.35]{controesempio42}
%\caption{The graph of a ${\cal{C}}^{\infty}$ Lyapunov function for a system in which $\mathcal{SCR}(\phi)\subsetneq\mathcal{CR}(\phi)$. It refers to the dynamical system of Figure \ref{figura2}.}
%\label{figuracontro}
%\end{figure}

\section{Strong chain transitivity and Lipschitz Lyapunov functions} \label{sei}
In this section, $\phi:X\times\R\rightarrow X$ is a continuous flow on a compact metric space $(X,d)$. \\
%\indent {\color{blue}$\rightarrow$ $L_T$-dominated and $L$-dominated functions e propriet\`a.} 
\indent We first introduce $L_T$-dominated and $L$-dominated functions and then we explain their relation with Lipschitz continuous Lyapunov functions and the strong chain recurrent set. Next definitions and results are the continuous-time versions of the ones appearing in \cite{pageault}[Chapter 2, Section 4] and \cite{fathi2015}[Section 2.2] for homeomorphisms. 
\begin{definizione} \label{domi} \emph{(Dominated function)}
\begin{itemize}
\item[$(i)$] A function $f:X\rightarrow\R$ is said to be $L_T$-dominated if there exists $K_T>0$ such that 
\begin{equation*}
f(y)-f(x)\leq K_T L_T(x,y) \qquad \qquad \forall x,y \in X.
\end{equation*}
\item[$(ii)$] A function $f:X\rightarrow\R$ is said to be $L$-dominated if it is $L_T$-dominated for any $T > 0$.
\end{itemize}
\end{definizione}
\noindent The proof of the next proposition is an easy adaptation for flows of the proof of Lemma 4.3 in \cite{pageault}.
\begin{proposizione} \label{dom-def}
A Lipschitz continuous function $f:X\rightarrow\R$ is $L_T$-dominated if and only if it is definitively a Lyapunov function for $\phi$, that is
$$
f(\phi_t(x))\leq f(x) \qquad \qquad \text{for any } x\in X \text{ and } t\geq T.$$
\end{proposizione}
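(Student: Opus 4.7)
The plan is to prove the two implications separately, with the key observation being that the function $L_T$ nicely packages the defect of $f$ being a Lyapunov function along finite chains.

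For the easy direction ($L_T$-dominated $\Rightarrow$ definitive Lyapunov), I would simply specialize the inequality $f(y)-f(x)\le K_T\,L_T(x,y)$ to $y=\phi_t(x)$ with $t\ge T$. By property $(ii)$ of Proposition \ref{proprieta L_T}, we have $L_T(x,\phi_t(x))=0$ for every $t\ge T$, which immediately forces $f(\phi_t(x))\le f(x)$. This direction uses neither Lipschitz continuity nor compactness and is just a one-line consequence of the definitions.

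The substantive direction is the converse: assuming $f(\phi_t(x))\le f(x)$ for all $t\ge T$, I want to recover a global comparison between $f(y)-f(x)$ and $L_T(x,y)$, and I expect the natural constant to be $K_T=\mathrm{Lip}(f)$. Fix any chain $C=(x_i,t_i)_{i=1,\dots,n}\in C_T(x,y)$ from $x_1=x$ to $x_{n+1}=y$. The trick is the telescoping
$$f(y)-f(x)=\sum_{i=1}^{n}\bigl(f(x_{i+1})-f(\phi_{t_i}(x_i))\bigr)+\sum_{i=1}^{n}\bigl(f(\phi_{t_i}(x_i))-f(x_i)\bigr).$$
The second sum is $\le 0$ because each $t_i\ge T$ and $f$ is a definitive Lyapunov function; the first sum is bounded termwise by $\mathrm{Lip}(f)\,d(\phi_{t_i}(x_i),x_{i+1})$, so its total is $\le \mathrm{Lip}(f)\,l_T(C)$. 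Taking the infimum over $C\in C_T(x,y)$ yields $f(y)-f(x)\le\mathrm{Lip}(f)\,L_T(x,y)$, which is exactly the $L_T$-domination with $K_T=\mathrm{Lip}(f)$.

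I do not foresee a genuine obstacle here: the argument is essentially a telescoping trick combined with the Lipschitz bound on the ``jumps'' of a chain, and it is the natural continuous-time transcription of Fathi--Pageault's Lemma 4.3 for homeomorphisms that the paper cites. The only mild subtlety is to notice that the Lipschitz hypothesis on $f$ is what makes the constant $K_T$ uniform in $(x,y)$; the flow itself is not required to be Lipschitz for this proposition, since all the necessary properties of $L_T$ used (triangle inequality and vanishing along orbits after time $T$) hold for any continuous flow by Proposition \ref{proprieta L_T}.
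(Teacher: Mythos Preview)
Your proof is correct and follows essentially the same approach as the paper: the forward direction uses $L_T(x,\phi_t(x))=0$ for $t\ge T$, and the converse telescopes along a chain, bounding each step by the Lipschitz constant times the jump and dropping the nonpositive Lyapunov increments, to obtain $K_T=\mathrm{Lip}(f)$.
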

\begin{proof}
From one hand, let $f:X\rightarrow\R$ be a $L_T$-dominated, Lipschitz continuous function for $\phi$. Thanks to point $(ii)$ of Proposition \ref{proprieta L_T}, for any fixed $x\in X$ and $t\geq T$ it holds
$$
f(\phi_t(x))-f(x)\leq K_TL_T(x,\phi_t(x))=0,
$$
that is $f$ is a definitively Lyapunov function.\\
\noindent On the other hand, let $f:X\rightarrow\R$ be a $K$-Lipschitz continuous, definitively  Lyapunov function for $\phi$. Fixed $x,y\in X$, let $C = (x_i,t_i)_{i =1,\ldots,n} \in C_T(x,y)$ be a chain from $x$ to $y$. Then
$$
f(x_{i+1})-f(x_i)\leq f(x_{i+1})-f(\phi_{t_i}(x_i))\leq Kd(\phi_{t_i}(x_i),x_{i+1}) \qquad \qquad \forall i=1,\dots,n.
$$
\noindent By adding all the inequalities, we conclude that
$$
f(y)-f(x) = \sum_{i=1}^n \left( f(x_{i+1})-f(x_i) \right)\leq K\sum_{i=1}^n d(\phi_{t_i}(x_i),x_{i+1}) = K \, l_T(C).
$$
More precisely, by considering the infimum over all possible chains in $C_T(x,y)$ we have
\begin{equation}\label{precisely}
f(y)-f(x)\leq KL_T(x,y)\qquad\forall x,y\in X
\end{equation}
where $K$ is any Lipschitz constant for $f$.
%{\color{red} The presence of the time is overcame by point $(ii)$ of Proposition \ref{proprieta L_T} for the sufficiency and by using directly the definition of definitively Lyapunov function for the necessity.}
\end{proof}
\begin{corollario}
A Lipschitz continuous function $f:X\rightarrow\R$ is $L$-dominated if and only if it is a Lyapunov function for $\phi$.
\end{corollario}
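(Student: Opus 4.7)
The plan is to obtain this corollary as an essentially immediate consequence of the preceding Proposition, which characterizes Lipschitz continuous $L_T$-dominated functions as those that are definitively Lyapunov ``from time $T$ onward''. Since being $L$-dominated means being $L_T$-dominated for every $T>0$, and being Lyapunov (in the global sense used throughout the paper) means $f\circ\phi_t\le f$ for \emph{all} $t\ge 0$, the corollary should fall out by taking intersections over $T$ on one side and specializing to any fixed $T$ on the other.

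For the forward implication ($L$-dominated $\Rightarrow$ Lyapunov), I would fix $t>0$ and choose $T\in(0,t]$, e.g.\ $T=t$. Since $f$ is in particular $L_T$-dominated, the Proposition yields $f(\phi_s(x))\le f(x)$ for every $s\ge T$, and evaluating at $s=t$ gives $f(\phi_t(x))\le f(x)$. Because this works for every $t>0$, and $f(\phi_0(x))=f(x)$ trivially, $f$ is a Lyapunov function for $\phi$.

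For the reverse implication (Lyapunov $\Rightarrow$ $L$-dominated), I would fix an arbitrary $T>0$. Since $f\circ\phi_t\le f$ for \emph{all} $t\ge 0$, in particular $f(\phi_t(x))\le f(x)$ for all $t\ge T$, so $f$ is definitively a Lyapunov function in the sense of the Proposition. Applying the Proposition gives that $f$ is $L_T$-dominated (with constant $K_T$ equal to any Lipschitz constant of $f$, as made explicit in inequality (\ref{precisely}) of the previous proof). As $T>0$ was arbitrary, $f$ is $L$-dominated.

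There is essentially no obstacle here: the work has been done in Proposition~\ref{dom-def}, and the corollary just repackages it by quantifying over $T$. The only tiny subtlety worth signalling in the written-out proof is that, in the reverse direction, one does not need to inflate the Lipschitz constant with $T$: the same $K$ (any Lipschitz constant of $f$) serves as $K_T$ uniformly, by (\ref{precisely}).
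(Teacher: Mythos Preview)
Your proposal is correct and is exactly the intended argument: the paper states this corollary without proof, treating it as an immediate consequence of Proposition~\ref{dom-def} obtained by quantifying over $T>0$, which is precisely what you do. Your remark that the same Lipschitz constant $K$ works as $K_T$ for all $T$ (via inequality~(\ref{precisely})) is also correct and a nice observation, though not needed for the bare statement.
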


\noindent In the sequel, we denote by $\mathcal{L}^1_T(\phi)$ the set of $1$-Lipschitz continuous functions $f:X\to \mathbb{R}$ such that $f(\phi_t(x)) \le f(x)$ for any $x\in X$ and $t \ge T$. The next lemma is the continuous-time version of Proposition 4.6 in \cite{pageault}.
\begin{lemma} \label{lemma 11} For any $x \in \mathcal{SCR}(\phi)$ and $y \in X$, we have
\begin{equation} \label{elle 1}
L_T(x,y) = \sup_{f \in \mathcal{L}^1_T(\phi)} f(y) - f(x).
\end{equation}
%and
%$$L_T^*(x,y) = \sup_{f \in \mathcal{L}^1_T(\phi)} |f(y) - f(x)|.$$
\end{lemma}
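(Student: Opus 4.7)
The plan is to prove the two inequalities separately, with the nontrivial direction produced by exhibiting an explicit optimizer built from $L_T$ itself.

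For the $\le$ direction, I would simply invoke what is already packaged inside the proof of Proposition \ref{dom-def}. Any $f \in \mathcal{L}^1_T(\phi)$ is, by definition, $1$-Lipschitz and a definitively Lyapunov function for $\phi$ (with threshold $T$), so inequality \eqref{precisely} (applied with Lipschitz constant $K=1$) immediately yields
$$ f(y) - f(x) \le L_T(x,y) \qquad \forall x,y \in X. $$
Taking the supremum over $f \in \mathcal{L}^1_T(\phi)$ gives $\sup_{f} \bigl( f(y) - f(x) \bigr) \le L_T(x,y)$, without using the hypothesis $x \in \mathcal{SCR}(\phi)$.

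For the $\ge$ direction, which is the heart of the lemma, I would exhibit a single candidate attaining the supremum: fix $x$ and define $f_x : X \to \R$ by $f_x(z) := L_T(x,z)$. The properties of $L_T$ collected in Proposition \ref{proprieta L_T} make $f_x$ a legitimate element of $\mathcal{L}^1_T(\phi)$. Indeed, $(iii)$ of that proposition says $f_x$ is $1$-Lipschitz continuous, and for every $z \in X$ and every $t \ge T$ the triangle inequality $(i)$ combined with $(ii)$ yields
$$ f_x(\phi_t(z)) = L_T(x,\phi_t(z)) \le L_T(x,z) + L_T(z,\phi_t(z)) = L_T(x,z) = f_x(z), $$
so $f_x \in \mathcal{L}^1_T(\phi)$. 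Consequently $\sup_{f \in \mathcal{L}^1_T(\phi)} (f(y) - f(x)) \ge f_x(y) - f_x(x) = L_T(x,y) - L_T(x,x)$.

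The final step is where the hypothesis $x \in \mathcal{SCR}(\phi)$ enters: by the characterization \eqref{caratterizzazione scr at}, $\mathcal{SCR}(\phi) = \bigcap_{T>0} \mathcal{A}_T$, so $x \in \mathcal{A}_T$ and hence $L_T(x,x) = 0$ by the definition \eqref{def A_T} of $\mathcal{A}_T$. This makes $f_x(y) - f_x(x) = L_T(x,y)$, closing the gap and forcing the equality claimed in \eqref{elle 1}. No delicate obstacle arises; the only thing to watch is that the strong chain recurrence hypothesis is used exclusively to ensure $L_T(x,x)=0$, which is precisely what allows the candidate $f_x$ to realize the supremum rather than merely approach it.
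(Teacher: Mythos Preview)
Your proof is correct and follows essentially the same approach as the paper's: both establish the upper bound via inequality \eqref{precisely} with $K=1$, and both exhibit the optimizer $f_x(\cdot)=L_T(x,\cdot)$, checking $f_x\in\mathcal{L}^1_T(\phi)$ via Proposition \ref{proprieta L_T} and using $x\in\mathcal{SCR}(\phi)\subseteq\mathcal{A}_T$ to get $L_T(x,x)=0$.
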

\begin{proof}  By the previous Proposition \ref{dom-def}, if $f \in \mathcal{L}^1_T(\phi)$ then $f$ is $L_T$-dominated; moreover, see inequality (\ref{precisely}), it results that $K_T = 1$. This means that
$$f(w) - f(z) \le L_T(z,w) \qquad \forall z,w \in X$$
and consequently
$$L_T(z,w) \ge \sup_{f \in \mathcal{L}^1_T(\phi)} \left( f(w) - f(z) \right) \qquad \forall z,w \in X.$$
In order to prove the other inequality, let us fix $x \in \mathcal{SCR}(\phi)$ and define $f_{x}(\cdot) := L_T(x,\cdot)$. We notice that the function $f_x(\cdot) \in \mathcal{L}^1_T(\phi)$. Indeed:
$$f_x(z) - f_x(y) = L_T(x,z) - L_T(x,y) \le d(y,z) \qquad \forall y,z \in X$$
and 
$$f_x(\phi_t(y)) - f_x(y) = L_T(x,\phi_t(y)) - L_T(x,y) \le L_T(y,\phi_t(y)) = 0$$
for any $y \in X$ and $t \ge T$. Moreover, it holds that
$$L_T(x,y) = L_T(x,y) - L_T(x,x) = f_x(y) - f_x(x) $$
because $L_T(x,x) = 0$. As a consequence,
$$L_T(x,y) \le \sup_{f \in \mathcal{L}^1_T(\phi)} \left( f(y) - f(x) \right) \qquad \forall x \in \mathcal{SCR}(\phi) \text{ and } y \in X.$$
\begin{comment}
{\color{red}By the previous Proposition \ref{dom-def}, if $f \in \mathcal{L}^1_T(\phi)$ then $f$ is $L_T$-dominated, with $K_T = 1$. Following the proof of Proposition 4.6 in \cite{pageault}, we obtain
$$L_T(z,w) \ge \sup_{f \in \mathcal{L}^1_T(\phi)} \left( f(w) - f(z) \right) \qquad \forall z,w \in X.$$
In order to prove the other inequality, we again refer to the proof of Proposition 4.6 in \cite{pageault}. Let us fix $x \in \mathcal{SCR}(\phi)$ and define $f_{x}(\cdot) := L_T(x,\cdot)$. We notice that the function $f_x(\cdot) \in \mathcal{L}^1_T(\phi)$. Indeed:
$$f_x(x) - f_x(y) = L_T(x,z) - L_T(x,y) \le d(y,z) \qquad \forall y,z \in X$$
and 
$$f_x(\phi_t(y)) - f_x(y) = L_T(x,\phi_t(y)) - L_T(x,y) \le L_T(y,\phi_t(y)) = 0$$
for any $y \in X$ and $t \ge T$. Moreover, it holds that
$$L_T(x,y) = L_T(x,y) - L_T(x,x) = f_x(y) - f_x(x) $$
because $L_T(x,x) = 0$. As a consequence,
$$L_T(x,y) \le \sup_{f \in \mathcal{L}^1_T(\phi)} \left( f(y) - f(x) \right) \qquad \forall x \in \mathcal{SCR}(\phi) \text{ and } y \in X.$$}
\end{comment}
\end{proof} 
%\indent {\color{blue}$\rightarrow$ Risultati su funzioni di Lyapunov Lipschitziane e componenti transitive per catene forti.} 
\indent Previous Lemma \ref{lemma 11} leads to prove the following results for the set $\mathcal{L}(\phi)$ of all Lipschitz continuous Lyapunov functions for $\phi$. The corresponding version for homeomorphisms is given by Corollary 4.7 and Theorem 4.8. of Chapter 2 in \cite{pageault}.
\begin{proposizione} \label{15 maggio}
Any $f\in\mathcal{L}(\phi)$ is constant on every strong chain transitive component of $\mathcal{SCR}(\phi)$. Moreover, if $x,y\in\mathcal{SCR}(\phi)$ belong to different strong chain transitive components of $\mathcal{SCR}(\phi)$, then there exists a function $f\in\mathcal{L}(\phi)$ such that $f(x)\neq f(y)$.
\end{proposizione}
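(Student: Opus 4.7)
The proof splits naturally into two independent assertions, which I plan to treat separately.

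\textbf{Constancy on components.} For the first statement, my plan is to invoke the corollary immediately following Proposition \ref{dom-def}: every $f \in \mathcal{L}(\phi)$ is $L$-dominated, so for each $T>0$ there exists a constant $K_T$ with $f(y)-f(x) \le K_T L_T(x,y)$ for all $x,y \in X$. If $x$ and $y$ lie in the same strong chain transitive component of $\mathcal{SCR}(\phi)$, then by definition $L_T(x,y)=L_T(y,x)=0$ for every $T>0$, and applying the inequality together with its symmetric counterpart immediately yields $f(x)=f(y)$. This part is essentially an observation.

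\textbf{Separation of different components.} The non-trivial part is the converse, and here I propose to construct an explicit separating Lipschitz Lyapunov function by adapting the three-step scheme of Sections \ref{quattro}--\ref{cinque} to a single base point. If $x,y \in \mathcal{SCR}(\phi)$ lie in distinct components, then negating the component definition produces $T_0>0$ such that, after possibly swapping the roles of $x$ and $y$, $L_{T_0}(x,y)>0$. My plan is to replace the dense-sum function $u_T$ of \eqref{definizione u_T} by the single reference function centered at $x$, and then run the same smoothing operations:
$$h(z):=L_{T_0}(x,z),\qquad \tilde h(z):=\max_{s\in[0,T_0]} h(\phi_s(z)),\qquad \bar h(z):=\frac{1}{M_{T_0}}\int_0^{+\infty}\frac{e^{-s}}{M_s}\,\tilde h(\phi_s(z))\,ds.$$
By Proposition \ref{proprieta L_T}(iii), $h$ is $1$-Lipschitz, and combining the triangle inequality (Proposition \ref{proprieta L_T}(i)) with Proposition \ref{proprieta L_T}(ii) shows that $h$ is definitively Lyapunov. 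The proofs of parts $(i)$ and $(ii)$ of Lemmas \ref{proprieta utilde} and \ref{proprieta baru} used only these two properties of $u_T$ --strictness was needed only for part $(iii)$-- so they transfer verbatim to show that $\bar h$ is a Lipschitz continuous Lyapunov function for $\phi$, i.e.\ $\bar h \in \mathcal{L}(\phi)$.

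\textbf{Main obstacle: verifying that $\bar h$ separates $x$ and $y$.} What I expect to be the delicate step is the final check that $\bar h(x)\ne\bar h(y)$, because the smoothing operations (the max over $[0,T_0]$ and the time average) could in principle wash out the positive initial gap $L_{T_0}(x,y)$. The key tool will be Proposition \ref{invarianza}(i): since both $x,y\in\mathcal{SCR}(\phi)$, one has $L_{T_0}(x,\phi_r(x))=0$ and $L_{T_0}(\phi_s(y),y)=0$ for every $r,s\in\R$. The first identity forces $\tilde h$ to vanish on the entire orbit of $x$, so $\bar h(x)=0$. The second, combined with the triangle inequality $L_{T_0}(x,y)\le L_{T_0}(x,\phi_s(y))+L_{T_0}(\phi_s(y),y)$, gives $L_{T_0}(x,\phi_s(y))\ge L_{T_0}(x,y)>0$ for every $s\ge 0$, whence $\tilde h(\phi_s(y))\ge L_{T_0}(x,y)$ and therefore $\bar h(y)\ge L_{T_0}(x,y)\cdot M_{T_0}^{-1}\int_0^{+\infty}e^{-s}/M_s\,ds>0$. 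In other words, flow-invariance of $L_{T_0}$ along orbits in $\mathcal{SCR}(\phi)$ is precisely what keeps the gap from being smeared out, and gives $\bar h(x)<\bar h(y)$ as required.
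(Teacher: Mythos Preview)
Your proof is correct and follows the same strategy as the paper's: both start from $L_{T_0}(x,\cdot)$ (the paper reaches it via Lemma \ref{lemma 11}), apply the $\max_{s\in[0,T_0]}$ operation to obtain a genuine Lyapunov function, and invoke Proposition \ref{invarianza} to show that the separation between $x$ and $y$ survives the smoothing. Your additional integral step $\bar h$ is harmless but unnecessary --- $\tilde h$ is already in $\mathcal{L}(\phi)$ by Lemma \ref{proprieta utilde}(i)--(ii), and the same orbit-invariance computation you carry out gives $\tilde h(x)=0<L_{T_0}(x,y)\le\tilde h(y)$ directly.
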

\begin{proof} Given $f \in \mathcal{L}(\phi)$ with Lipschitz constant $K_f > 0$, it clearly holds that $\frac{f}{K_f} \in \mathcal{L}^1_T(\phi)$ for any $T > 0$. Let now $x,y \in \mathcal{SCR}(\phi)$ be in the same strong chain transitive component of $\mathcal{SCR}(\phi)$, that is $L_T(x,y) = 0 = L_T(y,x)$ for all $T > 0$. By equality (\ref{elle 1}), one has
$$
0=L_T(x,y) = \sup_{g\in\mathcal{L}^1_T(\phi)}g(y)-g(x)\geq \dfrac{f(y)}{K_f}-\dfrac{f(x)}{K_f}
$$
and
$$
0=L_T(y,x) = \sup_{g\in\mathcal{L}^1_T(\phi)}g(x)-g(y)\geq \dfrac{f(x)}{K_f}-\dfrac{f(y)}{K_f}
$$
so that $f(y) = f(x)$. \\
Conversely, let $x,y \in \mathcal{SCR}(\phi)$ be in different strong chain transitive components of $\mathcal{SCR}(\phi)$. Exchanging the role of $x,y$ if needed, this means that there exists a time $T > 0$ such that $L_T(x,y) > 0$. By using equality (\ref{elle 1}) again, we conclude that there exists a function $f \in \mathcal{L}^1_T(\phi)$ such that $f(y) \ne f(x)$. Let us now define 
$$\tilde{f}(z) := \max_{s \in [0,T]} f(\phi_s(z))$$
which is in $\mathcal{L}(\phi)$. Moreover, there exist $\bar{s} = \bar{s}(x) \in [0,T]$ and $\hat{s} = \hat{s}(y) \in [0,T]$ such that 
$$\tilde{f}(x) = f(\phi_{\bar{s}}(x)) \qquad \text{and} \qquad \tilde{f}(y) = f(\phi_{\hat{s}}(y)).$$
Since now --by Proposition \ref{invarianza}-- if $z \in \mathcal{SCR}(\phi)$ and $s \in \R$ then $z$ and $\phi_s(z)$ belong to the same strong chain transitive component of $\mathcal{SCR}(\phi)$, we conclude that
$$\tilde{f}(x) = f(\phi_{\bar{s}}(x)) = f(x) \qquad \text{and} \qquad \tilde{f}(y) = f(\phi_{\hat{s}}(y)) = f(y).$$
As a consequence, we have proved that $\tilde{f} \in \mathcal{L}(\phi)$ is such that $\tilde{f}(x)\neq \tilde{f}(y)$.
\end{proof}
\noindent The proof of the next theorem is essentially the same of Theorem 4.8 in Chapter 2 in \cite{pageault} and it is omitted. We notice that Fathi and Pageault's statement is formulated by using the so-called $d$-Mather classes, corresponding exactly to the strong chain transitive components of the strong chain recurrent set for a given homeomorphism.
\begin{teorema} \label{FP for flows}
Let $\phi:X\times\R\rightarrow X$ be a continuous flow on a compact metric space $(X,d)$. $\mathcal{SCR}(\phi)$ has a unique strong chain transitive component if and only if the only Lipschitz continuous Lyapunov functions for $\phi$ are the constants.
\end{teorema}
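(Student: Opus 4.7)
The proof should hinge on Proposition \ref{15 maggio} combined with a standard fact about limit sets: for a continuous flow on a compact metric space, both the $\omega$-limit set $\omega(x)$ and the $\alpha$-limit set $\alpha(x)$ of any $x \in X$ are non-empty, invariant, and contained in $\mathcal{NW}(\phi) \subseteq \mathcal{SCR}(\phi)$.

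For the easy implication $(\Leftarrow)$, I would argue by contrapositive. Suppose $\mathcal{SCR}(\phi)$ admits at least two distinct strong chain transitive components, and pick $x, y$ in different components. Then the second half of Proposition \ref{15 maggio} directly produces an $f \in \mathcal{L}(\phi)$ with $f(x) \neq f(y)$, which is by construction a non-constant Lipschitz continuous Lyapunov function.

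For the converse $(\Rightarrow)$, assume $\mathcal{SCR}(\phi)$ has a unique strong chain transitive component and let $f \in \mathcal{L}(\phi)$ be arbitrary. The first half of Proposition \ref{15 maggio} supplies a constant $c \in \R$ such that $f \equiv c$ on $\mathcal{SCR}(\phi)$. To promote this to constancy on all of $X$, I fix $x \in X$ and analyze the real-valued map $t \mapsto f(\phi_t(x))$, which is monotone non-increasing on $\R$ because $f$ is a Lyapunov function (applying $f\circ\phi_{t-s} \le f$ at $\phi_s(x)$ for $t \ge s$). Using compactness of $X$ and continuity of $f$, every subsequential limit of $f(\phi_{t_n}(x))$ as $t_n \to +\infty$ lies in $f(\omega(x)) = \{c\}$, and every subsequential limit as $t_n \to -\infty$ lies in $f(\alpha(x)) = \{c\}$. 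Hence the monotone function $t \mapsto f(\phi_t(x))$ has the same limit $c$ at $\pm\infty$, and therefore is constantly equal to $c$; in particular $f(x) = c$. Since $x$ was arbitrary, $f$ is constant on $X$.

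The main obstacle is precisely this step in the $\Rightarrow$ direction, namely extending the constancy of $f$ from $\mathcal{SCR}(\phi)$ to the transient part $X \setminus \mathcal{SCR}(\phi)$, since Proposition \ref{15 maggio} by itself only controls the recurrent portion of the dynamics. The bridge is the observation that forward and backward limit sets of every point already lie inside $\mathcal{SCR}(\phi)$, so the monotonicity built into the definition of a Lyapunov function forces $f(\phi_t(x))$ to be constant in $t$. Note that no Lipschitz regularity assumption on the flow enters this argument, which is coherent with the hypotheses of the theorem as stated.
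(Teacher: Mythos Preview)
Your proof is correct and follows essentially the approach the paper has in mind: the authors omit the proof, stating it is ``essentially the same of Theorem 4.8 in Chapter 2 in \cite{pageault}'', and your argument is precisely the natural continuous-time adaptation of that proof, leaning on Proposition \ref{15 maggio} (the flow analogue of Pageault's Corollary 4.7) for both implications. The only additional ingredient you supply beyond Proposition \ref{15 maggio} --- the passage from constancy on $\mathcal{SCR}(\phi)$ to constancy on all of $X$ via the monotonicity of $t\mapsto f(\phi_t(x))$ and the inclusions $\alpha(x),\omega(x)\subseteq\mathcal{NW}(\phi)\subseteq\mathcal{SCR}(\phi)$ --- is standard and exactly what is needed.
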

%\begin{proof} If every Lipschitz continuous Lyapunov function for $\phi$ is constant, then $\mathcal{SCR}(\phi)$ has a unique strong chain transitive component by the previous proposition. Conversely, if $\mathcal{SCR}(\phi)$ has a unique strong chain transitive component, any Lipschitz continuous Lyapunov function for $\phi$ is constant on $\mathcal{SCR}(\phi)$. Let $f \in \mathcal{L}(\phi)$. Since the $\alpha$ and $\omega$ limit set of every $x \in X$ are contained in $\mathcal{SCR}(\phi)$ --where $f$ is constant-- and $f$ is a Lyapunov function for $\phi$, then $f$ has to be a constant on the whole $X$. \end{proof}
%\indent {\color{blue}$\rightarrow$ Complete and psuedo-complete Lyapunov functions.} 
\indent In the fundamental theorem of dynamical systems recalled in the introduction (see \cite{conl33}[Chapter II]), Conley made explicit a so-called complete Lyapunov function for $\phi$, whose properties are listed below.
\begin{definizione} \label{completa} \emph{(Complete Lyapunov function)} \\
Let $\phi:X\times\R\rightarrow X$ be a continuous flow on a compact metric space $(X,d)$. A continuous Lyapunov function $f:X\rightarrow\R$ for $\phi$ is called complete if:
\begin{itemize}
\item[$(i)$] $\mathcal{N}(f) = \mathcal{CR}(\phi)$.
\item[$(ii)$] If $x,y \in \mathcal{CR}(\phi)$, then $f(x) = f(y)$ if and only if $x$ and $y$ belong to the same chain transitive component of $\mathcal{CR}(\phi)$.
\item[$(iii)$] $f(\mathcal{CR}(\phi))$ is a compact nowhere dense subset of $\R$. 
\end{itemize}
\end{definizione}
\noindent We notice that the function given by Theorem \ref{main thm} cannot be in general assumed to be complete with respect to $\mathcal{SCR}(\phi)$. For example, look at the dynamical system of Figure \ref{figura2}. In such a case, every fixed point is a strong chain transitive component of $\mathcal{SCR}(\phi) = Fix(\phi)$ but it is not possible to construct a continuous Lyapunov function such that properties:
\begin{itemize}
\item[$(i)'$] $\mathcal{N}(f) = \mathcal{SCR}(\phi)$
\end{itemize}
and
\begin{itemize}
\item[$(ii)'$] If $x,y \in \mathcal{SCR}(\phi)$, then $f(x) = f(y)$ if and only if $x$ and $y$ belong to the same strong chain transitive component of $\mathcal{SCR}(\phi)$
\end{itemize}
simultaneously hold. In particular, if we change the setting from chain recurrence to strong chain recurrence, the next notion of pseudo-complete Lyapunov function is crucial. This definition was first introduced (at least at our knowledge) by Katsuya Yokoi for homeomorphisms (see \cite{yok}[Definition 5.1] and also \cite{wis}[Section 6]).
\begin{definizione} \emph{(Pseudo-complete Lyapunov function)} \label{pseudocompleta} \\
Let $\phi:X\times\R\rightarrow X$ be a continuous flow on a compact metric space $(X,d)$.  A continuous Lyapunov function $f:X\rightarrow\R$ for $\phi$ is called pseudo-complete if $\mathcal{N}(f) = \mathcal{SCR}(\phi)$ and $f$ is constant on every strong chain transitive component of $\mathcal{SCR}(\phi)$. 
\end{definizione}
\noindent Indeed, from Theorem \ref{main thm} and the first part of Proposition \ref{15 maggio}, we immediately deduce the next 
\begin{corollario} Let $\phi:X\times\R\rightarrow X$ be a continuous flow on a compact metric space $(X,d)$, uniformly Lipschitz continuous on compact subsets of $[0,+\infty)$. Then there exists a pseudo-complete Lyapunov function for $\phi$. 
\end{corollario}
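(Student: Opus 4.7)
The plan is to observe that the corollary follows almost immediately by combining two results already established in the paper: the existence of a Lipschitz Lyapunov function with neutral set exactly $\mathcal{SCR}(\phi)$, and the fact that any Lipschitz Lyapunov function is automatically constant on strong chain transitive components.

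Concretely, I would take as candidate the function $u:X \to \R$ constructed in Theorem \ref{main thm}, namely $u(x) = \sum_{n \in \N} 2^{-n}\bar{u}_n(x)$. By that theorem, $u$ is Lipschitz continuous, is a Lyapunov function for $\phi$, and satisfies $\mathcal{N}(u) = \mathcal{SCR}(\phi)$. This already gives the continuity, the Lyapunov property, and the first requirement of Definition \ref{pseudocompleta}.

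It then remains to verify that $u$ is constant on every strong chain transitive component of $\mathcal{SCR}(\phi)$. Since $u$ is Lipschitz continuous we have $u \in \mathcal{L}(\phi)$, so the first part of Proposition \ref{15 maggio} applies directly: any function in $\mathcal{L}(\phi)$ takes the same value on the two endpoints $x,y \in \mathcal{SCR}(\phi)$ whenever $x$ and $y$ lie in the same strong chain transitive component. Hence $u$ fulfils both conditions of Definition \ref{pseudocompleta} and is a pseudo-complete Lyapunov function for $\phi$.

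There is essentially no obstacle here beyond pointing to the two ingredients; the combinatorial work is already absorbed into Theorem \ref{main thm} (for the neutral set identification) and Proposition \ref{15 maggio} (for constancy along transitive components). The only thing to double-check is that the Lipschitz hypothesis on the flow, which is needed to invoke Theorem \ref{main thm}, is indeed assumed in the statement of the corollary, which it is. So the proof reduces to a one-line citation of these two previous results.
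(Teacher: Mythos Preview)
Your proof is correct and follows exactly the paper's approach: the paper likewise derives the corollary in one line by combining Theorem \ref{main thm} (to obtain a Lipschitz Lyapunov function $u$ with $\mathcal{N}(u)=\mathcal{SCR}(\phi)$) with the first part of Proposition \ref{15 maggio} (to conclude constancy on strong chain transitive components).
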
 
\noindent We finally remark that the notion of pseudo-complete Lyapunov function is useful to discuss the condition for the strong chain recurrent and chain recurrent sets to be equal (see \cite{yok}[Theorem 5.3] for the case of a homeomorphism).
\begin{proposizione} \label{ultima car} Let $\phi:X\times\R\rightarrow X$ be a continuous flow on a compact metric space $(X,d)$. Then, $\mathcal{SCR}(\phi) = \mathcal{CR}(\phi)$ if and only if there exists a pseudo-complete Lyapunov function $u: X \to \R$ for $\phi$ such that $u(\mathcal{SCR}(\phi))$ is totally disconnected. 
\end{proposizione}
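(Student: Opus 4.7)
The plan is to prove the two implications separately: the forward one is an immediate corollary of Conley's fundamental theorem, while the reverse direction proceeds by contradiction, exploiting the orbit structure of a hypothetical point in $\mathcal{CR}(\phi)\setminus\mathcal{SCR}(\phi)$ together with chain recurrence.

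For $(\Rightarrow)$, assuming $\mathcal{SCR}(\phi)=\mathcal{CR}(\phi)$, Conley's theorem recalled in the Introduction yields a continuous complete Lyapunov function $v:X\to\R$ with $\mathcal{N}(v)=\mathcal{CR}(\phi)$, constant on each chain transitive component of $\mathcal{CR}(\phi)$, and with $v(\mathcal{CR}(\phi))$ compact nowhere dense in $\R$. Under the equality hypothesis, $\mathcal{N}(v)=\mathcal{SCR}(\phi)$; since every strong $(\eps,T)$-chain is in particular an $(\eps,T)$-chain, every strong chain transitive component of $\mathcal{SCR}(\phi)$ is contained in a chain transitive component of $\mathcal{CR}(\phi)$, so $v$ is constant on each of them, making $v$ pseudo-complete. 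Finally, a compact nowhere dense subset of $\R$ contains no non-degenerate interval, hence is totally disconnected, so $v(\mathcal{SCR}(\phi))=v(\mathcal{CR}(\phi))$ is totally disconnected.

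For $(\Leftarrow)$, let $u$ be a pseudo-complete Lyapunov function with $u(\mathcal{SCR}(\phi))$ totally disconnected. I argue by contradiction assuming there exists $x\in\mathcal{CR}(\phi)\setminus\mathcal{SCR}(\phi)$. Invariance of $\mathcal{CR}(\phi)$ and $\mathcal{SCR}(\phi)$ forces the entire orbit of $x$ to avoid $\mathcal{N}(u)=\mathcal{SCR}(\phi)$, so $t\mapsto u(\phi_t(x))$ is continuous and strictly decreasing on $\R$; setting $a:=\lim_{t\to+\infty}u(\phi_t(x))$ and $b:=\lim_{t\to-\infty}u(\phi_t(x))$ one has $a<b$. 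Continuity of $u$ together with the invariance of $\omega(x)$ and $\alpha(x)$ yields $u\equiv a$ on $\omega(x)$ and $u\equiv b$ on $\alpha(x)$; a further invariance argument shows that for any $y\in\omega(x)$ one has $u(\phi_t(y))=a=u(y)$ for every $t$, whence $\omega(x),\alpha(x)\subseteq\mathcal{N}(u)=\mathcal{SCR}(\phi)$ and in particular $\{a,b\}\subseteq u(\mathcal{SCR}(\phi))$.

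The strategy to close is to enhance this to $[a,b]\subseteq u(\mathcal{SCR}(\phi))$, which will contradict total disconnectedness. For each $c\in(a,b)$ the intermediate value theorem applied to the strictly monotone orbit function produces $z_c=\phi_{t_c}(x)\in\mathcal{CR}(\phi)$ with $u(z_c)=c$; the chain recurrence of $z_c$ then provides $(\eps,T)$-chains from $z_c$ to $z_c$ for every $\eps,T>0$, and via a compactness/diagonal argument on $\eps\to 0$ one aims to extract a limit point $y_c$ with $u(y_c)=c$ belonging to $\mathcal{SCR}(\phi)$. The main obstacle is precisely this last step: chain recurrence only bounds each individual jump, not the total jump, whereas membership in $\mathcal{SCR}(\phi)$ demands the latter; overcoming this requires carefully coupling the strict Lyapunov descent of $u$ outside $\mathcal{SCR}(\phi)$ with the closedness and $\phi$-invariance of $\mathcal{SCR}(\phi)$, in order to force the extracted accumulation points to stabilise onto $\mathcal{SCR}(\phi)$ at the prescribed $u$-level $c$. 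This selection argument is the technical heart of the converse direction.
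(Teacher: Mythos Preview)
Your forward direction is correct and essentially identical to the paper's argument.

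Your reverse direction, however, has a genuine gap that you yourself flag but do not close. The claim you are trying to establish, namely that for every $c\in(a,b)$ there exists $y_c\in\mathcal{SCR}(\phi)$ with $u(y_c)=c$, is not something that follows from a ``compactness/diagonal'' extraction on $(\eps,T)$-chains through $z_c$. A limit of $(\eps,T)$-chains as $\eps\to 0$ produces at best a chain recurrent point, not a strongly chain recurrent one; there is no mechanism in your sketch that converts controlled individual jumps into controlled total jumps, and the vague ``coupling with the strict descent of $u$'' is not an argument. In fact the statement $[a,b]\subseteq u(\mathcal{SCR}(\phi))$ is exactly what is contradicted by the hypothesis of total disconnectedness, so you cannot hope to prove it directly from general principles---you would be proving something false under assumptions that are not yet known to be inconsistent.

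The paper avoids this difficulty entirely by arguing contrapositively through Conley's attractor description of chain recurrence. Starting from $x\notin\mathcal{SCR}(\phi)$ one picks $\bar t>0$ with $u(\phi_{\bar t}(x))<u(x)$, uses total disconnectedness to choose a value $r_0$ strictly between them with $r_0\notin u(\mathcal{SCR}(\phi))$, and sets $U:=\{u<r_0\}$. Because $u$ is a Lyapunov function and no point with $u$-value $r_0$ lies in $\mathcal{N}(u)$, one checks that $K:=\omega(U)\subseteq U$, so $K$ is an attractor; moreover $x\notin K$ while $\omega(x)\subseteq K$. Conley's equivalence (a point is outside $\mathcal{CR}(\phi)$ iff it lies outside some attractor whose basin contains it) then gives $x\notin\mathcal{CR}(\phi)$. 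This is short and uses only the single gap value $r_0$, whereas your approach attempts to control all intermediate levels simultaneously and stalls precisely at the step where chain recurrence and strong chain recurrence diverge.
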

\begin{proof} Let $\mathcal{SCR}(\phi)=\mathcal{CR}(\phi)$. By Conley's fundamental theorem (see \cite{conl33}[Chapter II, Section 6.4]), there exists a complete Lyapunov function $u$ for $\phi$. Clearly, $u$ is also a pseudo-complete Lyapunov function for $\phi$ such that $u(\mathcal{SCR}(\phi))=u(\mathcal{CR}(\phi))$ is totally disconnected.\\
\noindent Conversely, let $u:X\rightarrow\R$ be a pseudo-complete Lyapunov function for $\phi$ such that $u(\mathcal{SCR}(\phi))$ is totally disconnected. Without loss of generality, we suppose that $u(x) \ge 0$ for any $x \in X$. In order to show that $\mathcal{CR}(\phi)\subseteq\mathcal{SCR}(\phi)$, we remind that the next conditions are equivalent:
\begin{itemize}
\item[$(i)$] $x\notin\mathcal{CR}(\phi)$.
\item[$(ii)$] There exists an attractor $K$ such that $x\notin K$ but $\omega(x)\subseteq K$.
\end{itemize}
See \cite{conl33}[Chapter II, Section 6.2, Page 37] for this equivalence. Let $x\notin \mathcal{SCR}(\phi)$ and $\bar{t}>0$ such that $u(\phi_{\bar{t}}(x))<u(x)$. Since $u(\mathcal{SCR}(\phi))$ is totally disconnected, there exists $r_0 > 0$ such that $u(\phi_{\bar{t}}(x))<r_0<u(x)$ and $r_0\notin u(\mathcal{SCR}(\phi))$. \noindent We define $U:=\{y\in X:\ u(y)\in[0,r_0)\}$. Since $u$ is a Lyapunov function for $\phi$, we have that $K:=\omega(U)\subseteq U$, which means that $K$ is an attractor. Moreover, $x\notin K$ but, since $\phi_{\bar{t}}(x)\in U$, $\omega(x)\subseteq K$. From the equivalence of points $(i)$ and $(ii)$ recalled above, we conclude that $x\notin\mathcal{CR}(\phi)$. This prove that $\mathcal{CR}(\phi)\subseteq\mathcal{SCR}(\phi)$ and therefore $\mathcal{CR}(\phi) = \mathcal{SCR}(\phi)$.
\end{proof}

\addcontentsline{toc}{chapter}{Bibliography}
\bibliographystyle{siam}
\bibliography{BiblioArticolo2}

\end{document}